\newif\ifjournal
\date{\today}
\let\LinesNumbered\linesnumbered
\definecolor{labelkey}{rgb}{0,0,.75}
\definecolor{MyGreen}{rgb}{0,.6,.2}
\definecolor{MyDarkBlue}{rgb}{.1,.1,.75}
\newcommand{\abs}[1]{\left|#1\right|}
\newcommand{\ra}{\rightarrow}
\newcommand{\Reals}{\mathbb{R}}
\newif\ifeqstar
\newcommand{\be}{\begin{equation}}
\newcommand{\bel}[1]{\begin{equation}\label{#1}\eqstarfalse}%\end{equation}
\newcommand{\ee}{\ifeqstar\end{equation*}\else\end{equation}\fi}
\newcommand{\bes}{\begin{equation*}\eqstartrue}
\newcommand{\calN}{\mathcal{N}}
\newcommand{\calD}{\mathcal{D}}
\def\ip<#1>{\left<#1\right>}
\DeclareMathOperator{\Laplacian}{\Delta}
\DeclareMathOperator{\Lip}{\mathrm{Lip}}
\let\Lap\Laplacian
\newcommand{\Hoo}{H^{1/2}_{00}}
\newcommand{\Hoos}{H^{s}_{00}}
\newcommand{\Hoons}{H^{-s}_{00}}
\newcommand{\GB}{B}
\newcommand{\GS}{S}
\newcommand{\pO}{\partial\Omega}
\newtheorem{proposition}{Proposition}
\newtheorem{lemma}{Lemma}
\newtheorem{remark}{Remark}
\begin{document}
\author{David Maxwell}
\title{Kozlov-Maz'ya iteration as a form of Landweber iteration}
\maketitle
\begin{abstract}We consider the alternating method of Kozlov and Maz'ya for solving the Cauchy problem for elliptic boundary-value problems.  Considering the case of the Laplacian, we show that this method can be recast as a form of Landweber iteration.  In addition to conceptual advantages, this observation leads to some practical improvements.  We show how to accelerate Kozlov-Maz'ya iteration using the conjugate gradient algorithm, and we show how to modify the method to obtain a more practical stopping criterion.
\end{abstract}
\section{Introduction}\label{sec:intro}
The Cauchy problem for elliptic equations, where Dirichlet and Neumann data are prescribed simultaneously on a strict subset of the domain
boundary, is a prototypical ill-posed problem.  It is a linear problem, and can be approached
using any of a number of standard regularization techniques such as Tikhonov regularization \cite{CimetiereCauchy}, 
as well as logarithmic convexity methods\cite{PayneImproperlyPosed}.  
Kozlov and Maz'ya \cite{KozlovMazya1990} (see also \cite{KozlovMazyaFomin}) introduced 
a novel method for solving this problem that, 
while related to the general class of iterative methods, was not evidently one of the standard ones.  The method 
(sometimes known as the alternating method and called here Kozlov-Maz'ya iteration) 
is straightforward to implement numerically, and is therefore an attractive choice for practical use.  There are some
drawbacks, however.  The formal stopping criterion for Kozlov-Maz'ya iteration involves error estimates in certain Sobolov spaces with fractional derivatives.  Arriving at such  estimates for real data poses some difficulty.  Moreover,
it has been observed that Kozlov-Maz'ya iteration suffers from being slow.  Although there have been efforts to accelerate
the method using certain relaxation factors \cite{JourhamaneNatchaoui} \cite{JourhmaneLesnicMera}, formal proofs of the
stability of these ad-hoc techniques are not available.  

Our primary result is a demonstration that Kozlov-Maz'ya iteration is, in fact, a form of Landweber iteration \cite{Landweber1951} between function spaces equipped with suitable norms.  This observation yields a number of advantages.  First, the extensive  body of literature concerning Landweber iteration can
be brought to apply to Kozlov-Maz'ya iteration.  Proofs concerning its stability and rate of convergence can then be quoted from textbooks.
Second, standard techniques for accelerating Landweber iteration can be applied to Kozlov-Maz'ya iteration.  We indicate here a variation
based on the conjugate gradient method that is nearly as simple as standard Kozlov-Maz'ya iteration and
leads to a fast, order-optimal regularization method. Finally, we show how to 
modify one of the function spaces involved in the Landweber and conjugate gradient iterations to obtain a similar method with 
a more practical stopping criterion.

The motivation for this work comes from an inverse problem in glaciology \cite{MaxwellTrufferAvdonin}, which considered the Cauchy problem for a nonlinear elliptic PDE.  In that paper, the linearized inverse problems were solved using Kozlov-Mazya iteration, accelerated by the techniques described below in Section \ref{sec:cg}.  For simplicity, we focus our attention here on a model elliptic problem for the Laplacian; the extension to more general elliptic operators is straightforward. We remark that Kozlov-Maz'ya iteration has been extended to 
certain parabolic and hyperbolic inverse problems \cite{BaumeisterLeitao} 
as well as to a degenerate elliptic problem (the Stokes system) \cite{BastayEtAl2005}.  We do not treat these 
problems, but it hoped that the ideas presented here might also be useful in these cases.

\subsection{Formulation of the model Cauchy problem}
Let $\Omega\subseteq\Reals^n$ be an open, bounded, and connected set with a Lipschitz boundary $\pO$. Suppose
$\GS$ and $\GB$ are nonempty open subsets of $\pO$ sharing a common
boundary $\Pi$ and that $\pO=\GS\cup \GB\cup \Pi$ is a Lipschitz dissection as defined in \cite{McleanElliptic} (effectively $\Pi$ is an embedded Lipschitz hypersurface of $\Gamma$).
We suppose that boundary data is known on $\GS$ but unknown
on $\GB$; the notation suggests that $\GS$ is an accessible surface and $\GB$ is an inaccessible base.  
The Cauchy problem for the Laplacian is the following:
\bel{cauchy}	
\begin{aligned}
-\Laplacian	u &= f &&\text{in $\Omega$}\\
u &= \sigma &&\text{on $\GS$}\\
\partial_n u &= \tau &&\text{on $\GS$}.\\
\end{aligned}
\ee
Here $f\in (H^{-1}(\Omega))^*$, $\sigma\in H^{1/2}(\GS)$, $\tau\in H^{-1/2}(\GS)$, and $\partial_n$ denotes
the the normal derivative.  

Our notation and conventions for Sobolov spaces follow \cite{McleanElliptic},
except for one case noted below. The space $H^{s}(\GS)$ is the set of restrictions of distributions in $H^{s}(\partial\Omega)$ to $\GS$ and has the quotient norm; in this paper we will only use the cases $s=-1/2$, $0$, $1/2$. The subset of distributions $\sigma\in H^s(\partial\Omega)$ such that $\sigma|_B=0$ is denoted
by $H^{s}_{00}(S)$ (and by $\widetilde H^s(S)$ in \cite{McleanElliptic}).  It is a 
closed subspace of $H^{s}(\partial\Omega)$ and inherits the norm from the larger space.  We will consider elements of $H^{s}_{00}(S)$ as elements of $H^s(S)$ or as elements of $H^s(\partial \Omega)$ interchangeably 
and without comment.  Because of the regularity of the sets $\GS$ and $\GB$, there is a natural identification of $H^{-1/2}(\GS)$ with the dual space of $H^{1/2}_{00}(\GS)$.  More details concerning these conventions can be found in the Appendix.

The key step of recasting Kozlov-Maz'ya iteration as Landweber iteration requires making a judicious choice of (equivalent) norms on these boundary Sobolov spaces so that the adjoints of certain operators take on a natural form.  Proofs of the equivalence of these norms can be found in the Appendix.
For brevity we describe these norms here for distributions on $\GS$ with obvious adjustments needed for distributions on $\GB$.

\subsubsection{The space $H^{-1/2}(\GS)$}
Let $\psi\in H^{-1/2}(S)$ and let $v_\psi$ be the solution of
\bes
\begin{aligned}
\Laplacian	v_\psi &= 0 &&\text{in $\Omega$}\\
\partial_n v_\psi &= \psi &&\text{on $\GS$}\\
v_\psi &= 0 &&\text{on $\GB$}.
\end{aligned}
\ee
Then
\bes
||\psi||_{H^{-1/2}(\GS)}^2 = \int_\Omega \abs{\nabla v_\psi}^2.
\ee
Similarly, if $\psi_1, \psi_2\in H^{-1/2}(\GS)$ then
\bes
\ip<\psi_1,\psi_2>_{H^{-1/2}(\GS)} = \int_\Omega \nabla v_{\psi_1}\cdot
\nabla v_{\psi_2}.
\ee
This norm was described in the original work on Kozlov-Maz'ya iteration \cite{KozlovMazya1990}.

\subsubsection{The space $\Hoo(\GS)$}
Let $\phi\in \Hoo(\GS)$, so $\phi\in H^{1/2}(\partial\Omega)$ and $\phi|_B=0$. 
Let $w_\phi$ be the solution of
\bes
\begin{aligned}
\Laplacian	w_\phi &= 0 &&\text{in $\Omega$}\\
w_\phi &= \phi &&\text{on $\pO$}.
\end{aligned}
\ee
Then
\bes
||\phi||_{H^{1/2}(\GS)}^2 = \int_\Omega \abs{\nabla w_\phi}^2.
\ee
and there is a corresponding inner product
\bes
\ip<\phi_1,\phi_2>_{\Hoo(S)} 
= \int_\Omega \nabla w_{\phi_1}\cdot \nabla w_{\phi_2}.
\ee

\subsubsection{The space $H^{1/2}(\GS)$}\label{sec:H12}
Let $\phi\in H^{1/2}(\GS)$.
Let $r_\phi$ be the solution of
\bes
\begin{aligned}
	\Laplacian	r_\phi &= 0 &&\text{in $\Omega$}\\
	r_\phi &= \phi &&\text{on $\GS$}\\
	\partial_n r_\psi &= 0 &&\text{on $\GB$}.
\end{aligned}
\ee
Then
\bes
||\phi||_{H^{1/2}(\GS)}^2 = \int_\Omega \abs{\nabla r_\phi}^2 + \left[\frac{1}{|S|}\int_S r_\phi\right]^2,
\ee
where $|S|=\int_S 1$.  There is a corresponding inner product defined analogously to the other spaces.
% \end{itemize}

\section{Kozlov-Maz'ya iteration}
Kozlov-Maz'ya iteration proceeds by alternating between solving boundary-value problems involving the Dirichlet data ($\sigma$) and Neumann data ($\tau$) in turn. Given $\psi\in H^{-1/2}(\GB)$, let $\calN(\psi)=v$ where $v\in H^1(\Omega)$ is the solution of
\bes
\begin{aligned}
-\Laplacian	v &= f &&\text{in $\Omega$}\\
v &= \sigma &&\text{on $\GS$}\\
\partial_{n} v &= \psi&&\text{on $\GB$}.
\end{aligned}
\ee
Given $\phi\in H^{1/2}(\GB)$, let $\calD(\phi)=w$ where $w\in H^1(\Omega)$
is the solution of 
\bes
\begin{aligned}
-\Laplacian w &= f &&\text{in $\Omega$}\\
\partial_n w &= \tau &&\text{on $\GS$}\\
w &= \phi &&\text{on $\GB$}.
\end{aligned}
\ee
The solvability of these equations is discussed briefly in the Appendix.

Starting with an initial element $\psi_0\in H^{-1/2}(B)$, we let
$v_0=\calN(\psi_0)$ and $w_0=\calD(v_0|_B)$.  Then $\psi_1=\partial_n w_0|_{\GB}$. Sequences $\{\psi_k\}$, $\{v_k\}$ and $\{w_k\}$ are obtained by repeating these operations.  Letting 
\bes
KM(\psi) = \partial_n\; \calD(\calN(\psi)|_B)\,|_B,
\ee 
we see that $\psi_{k+1}=KM(\psi_k)$.  Note that we use the convention
that operators with a script font yield distributions on $\Omega$,
whereas operators with a roman font yield distributions on a
subset of $\partial \Omega$.

It was proved in \cite{KozlovMazyaFomin} 
that $u\in H^1(\Omega)$ is a solution of the Cauchy problem \eqref{cauchy} if and only if $\partial_n u|_B$ is a fixed point of $KM$.   
Moreover, if there exists a solution $u$ of equation \eqref{cauchy}, then the functions
$v_n$ and $w_n$ converge to $u$ in $H^1(\Omega)$. And finally, 
for approximate boundary data $(\sigma_\delta,\tau_\delta)$, stopping the iteration early according to a discrepancy principle leads to a 
regularization  strategy for solving the Cauchy problem.

There is a dual formulation of Kozlov-Maz'ya iteration 
obtained via the operator
$MK:H^{1/2}(S)\ra H^{1/2}(S)$ defined by
\bes
MK(\phi) = \calN(\partial_n\calD(\phi)|_B)|_B
\ee
(i.e. $MK$ is $KM$ performed in the reverse order). In the following two sections we will show that the original form of Kozlov-Maz'ya iteration and its less-studied dual formulation can be exhibited as forms of Landweber iteration.  

The maps $\calD$, $\calN$, $KM$, and $MK$ are affine, and it will be useful
to have notation for their linear parts.
Let $\calD_0$, $\calN_0$, $KM_0$, and $MK_0$ be defined as before
but with homogeneous data ($f=0$, $\sigma=0$, $\tau=0$).
It then follows that for any $\psi,\hat\psi\in H^{-1/2}(\GB)$ and $\phi,\hat\phi\in H^{1/2}(\GB)$ 
\bel{affineD}
\begin{aligned}
KM(\hat\psi+\psi) &= KM(\hat\psi) + KM_0(\psi)\\
MK(\hat\phi+\phi) &= MK(\hat\phi) + MK_0(\phi).
\end{aligned}
\ee
\section{Landweber iteration (Neumann version)}
Define $N:H^{-1/2}(B)\ra H^{-1/2}(S)$ by
\bes
N(\psi) = \partial_n \calN(\psi) |_S.
\ee
We will show in this section that the previously described 
alternating technique is, in fact, the Landweber method applied to 
the operator equation 
\bel{operatorN}
N(\psi) = \tau.
\ee
Note that if $u$ is a solution of the Cauchy problem \eqref{cauchy},
then $N(\partial_n u|_B) = \tau$.  Moreover, if $\psi$ solves
equation \eqref{operatorN}, then $u=\calN(\psi)$ solves the Cauchy problem
\eqref{cauchy}.

Let $N_0$ be defined similarly to $N$ using $\calN_0$ in place of $\calN$ (i.e using homogeneous data). So
$N_0:H^{-1/2}(\GB)\ra H^{-1/2}(\GS)$ is a linear map and
\bes
N(\psi) = N(0) + N_0(\psi).
\ee
Hence the operator equation \eqref{operatorN} can be rewritten
\bel{operatorN0}
N_0(\psi) = \tau-N(0).
\ee

The Landweber method provides a regularization 
technique for solving equation \eqref{operatorN0} that proceeds by minimizing the functional
\bes
J(\psi) 
= \frac{1}{2} || \tau-N(\psi)||_{H^{-1/2}(\GS)}^2
= \frac{1}{2} || \tau-N(0) - N_0(\psi)||_{H^{-1/2}(\GS)}^2
\ee
using a steepest descent algorithm.  The gradient of $J$ at $\psi$ is
\bes
-N_0^*\left[\tau-N(0)-N_0(\psi)\right],
\ee
and we define
\bel{landweberN}
L_N(\psi) = \psi + a N_0^*\left[\tau-N(0)-N_0(\psi)\right]
\ee
where $a$ is a fixed constant chosen so that $0<a\le 1/||N_0||^2$.
The Landweber method then produces iterates $\psi_{k+1} = L_N(\psi_k)$ starting from  an initial value $\psi_0$, and the functions $\psi_k$ are then approximate solutions of the original operator equation \eqref{operatorN}.

Computation using the Landweber method requires knowledge of the adjoint $N_0^*$, which has a natural form given our chosen inner products.
\begin{lemma}\label{lem:Astar}
Let $\xi\in H^{-1/2}(\GS)$, and define $q$ to be the solution
of
\bes
\begin{aligned}
-\Lap q & = 0 &&\text{in $\Omega$}\\
\partial_n q & = \xi &&\text{on $\GS$}\\
q & = 0 &&\text{on $\GB$}.
\end{aligned}
\ee
Then $N_0^*(\xi) = \partial_n q|_B$.
\end{lemma}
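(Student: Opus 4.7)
The plan is straightforward: for arbitrary $\psi\in H^{-1/2}(\GB)$, expand both sides of the adjoint identity $\ip<N_0(\psi),\xi>_{H^{-1/2}(\GS)} = \ip<\psi,N_0^*(\xi)>_{H^{-1/2}(\GB)}$ using the liftings that define our chosen inner products, and reduce each to the same symmetric bilinear expression via Green's identity.

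Unpacking the left side: the function $q$ defined in the lemma is exactly the $H^{-1/2}(\GS)$-lift $v_\xi$ of $\xi$, so
\bes
\ip<N_0(\psi),\xi>_{H^{-1/2}(\GS)} = \int_\Omega \nabla v_{N_0(\psi)}\cdot\nabla q.
\ee
Unpacking the right side with $\eta := \partial_n q|_\GB$ and $v_\eta := \calN_0(\eta)$ the $H^{-1/2}(\GB)$-lift of $\eta$:
\bes
\ip<\psi,\eta>_{H^{-1/2}(\GB)} = \int_\Omega \nabla \calN_0(\psi)\cdot\nabla v_\eta.
\ee

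The key observation I would exploit is that both of these integrals equal the auxiliary quantity $I := \int_\Omega \nabla \calN_0(\psi)\cdot\nabla q$. Applying Green's identity to $I$ in the two possible directions (using that both $\calN_0(\psi)$ and $q$ are harmonic) gives
\bes
I = \int_{\partial\Omega} q\,\partial_n \calN_0(\psi) = \int_{\partial\Omega} \calN_0(\psi)\,\partial_n q.
\ee
The first boundary integral collapses to $\int_\GS q|_\GS\,N_0(\psi)$ because $q|_\GB=0$; the second to $\int_\GB \calN_0(\psi)|_\GB\,\eta$ because $\calN_0(\psi)|_\GS=0$. Analogous integrations by parts on the two original integrals — using now the further homogeneous conditions $v_{N_0(\psi)}|_\GB=0$ and $v_\eta|_\GS=0$ built into those liftings — reduce each to one of these same two boundary pairings. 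Hence both original inner products equal $I$, proving $N_0^*(\xi)=\eta=\partial_n q|_\GB$.

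The only real obstacle is bookkeeping. One must carefully track which of the many homogeneous boundary conditions annihilates each extraneous term on $\GS$ or $\GB$, and verify that every boundary pairing of an $H^{-1/2}$-distribution against a trace lives in the natural $H^{-1/2}\times H^{1/2}_{00}$ duality described in the introduction. The latter point is automatic here, since by construction the trace being paired always vanishes on the complementary piece of $\partial\Omega$ and thus lies in $H^{1/2}_{00}$ on the piece where the pairing takes place.
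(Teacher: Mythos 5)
Your proposal is correct and follows essentially the same route as the paper's proof: you introduce the same four harmonic liftings (the paper's $v,w,q,r$ are your $\calN_0(\psi)$, $v_{N_0(\psi)}$, $q$, $v_\eta$), reduce both inner products to the common quantity $\int_\Omega \nabla\,\calN_0(\psi)\cdot\nabla q$ via Green's identity, and justify the boundary pairings through the $H^{-1/2}\times H^{1/2}_{00}$ duality exactly as the paper does. No gaps.
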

\begin{proof}
Let $\psi\in H^{-1/2}(\GB)$ and $\xi\in H^{-1/2}(\GS)$ be arbitrary.
We then let $v$, $w$, $q$ and $r$ be the solutions of the following 
boundary value problems:
\bes
\begin{aligned}
-\Lap v &= 0 &\quad-\Lap w &= 0 &\quad-\Lap q &= 0 &\quad-\Lap r &= 0 &&\quad\text{in $\Omega$}\\
v &= 0  &\quad \partial_n w &= \partial_n v &\quad \partial_n q &= \xi &\quad r &=0 &&\quad \text{on $\GS$}\\
\partial_n v &= \psi  &\quad w &= 0 &\quad q  &= 0 &\quad  \partial_n r &=\partial_n q &&\quad \text{on $\GB$}.
\end{aligned}
\ee
Then
\ifjournal
\bel{AStar1}
	\begin{aligned}
\ip<N_0\psi,\xi>_{H^{-1/2}(\GS)} = \int_\Omega \nabla w\cdot \nabla q 
& = \int_{\pO} \partial_n w\; q \\
& = \int_S \partial_n w\; q \\
& = \int_S \partial_n v\; q 
 = \int_{\pO} \partial_n v\; q 
 = \int_{\Omega} \nabla v\cdot \nabla q.
\end{aligned}
\ee
\else
\bel{AStar1}
	\begin{aligned}
\ip<N_0\psi,\xi>_{H^{-1/2}(\GS)} = \int_\Omega \nabla w\cdot \nabla q 
 = \int_{\pO} \partial_n w\; q 
& = \int_S \partial_n w\; q \\
& = \int_S \partial_n v\; q 
 = \int_{\pO} \partial_n v\; q 
 = \int_{\Omega} \nabla v\cdot \nabla q.
\end{aligned}
\ee
\fi
The first equality follows from the definition of the inner product on $H^{-1/2}(\GS)$ and we have used $q=0$ on $B$ and $\partial_n v=\partial_n w$ on $S$ in the subsequent equalities.
Similarly,
\ifjournal
\bel{AStar2}
\begin{aligned}
\ip<\psi,\partial_n q|_B>_{H^{-1/2}(\GB)} = \int_\Omega \nabla v\cdot \nabla r 
 &= \int_{\pO} v\; \partial_n r \\
 &= \int_S v\; \partial_n r \\
 &= \int_S v\; \partial_n q 
 = \int_{\pO} v\; \partial_n q
 = \int_{\Omega} \nabla v\cdot \nabla q.
\end{aligned}
\ee
\else
\bel{AStar2}
\begin{aligned}
\ip<\psi,\partial_n q|_B>_{H^{-1/2}(\GB)} = \int_\Omega \nabla v\cdot \nabla r 
 = \int_{\pO} v\; \partial_n r 
 &= \int_S v\; \partial_n r \\ 
 &= \int_S v\; \partial_n q 
 = \int_{\pO} v\; \partial_n q 
 = \int_{\Omega} \nabla v\cdot \nabla q.
\end{aligned}
\ee
\fi
From equalities \eqref{AStar1} and \eqref{AStar2} we conclude
\begin{equation*}
\ip<N_0\psi,\xi>_{H^{-1/2}(\GS)} = \ip<\psi,\partial_n q|_B>_{H^{-1/2}(\GB)},
\end{equation*}
and since $\psi\in H^{-1/2}(B)$ is arbitrary, $N_0^*(\xi) = q|_B$ as claimed.
\end{proof}

The alternating method of \cite{KozlovMazya1990} is exactly the Landweber method
(with the constant $a=1$) applied to minimizing the functional $J$.
\begin{proposition}\label{prop:kmislandweberN}
For any $\psi\in H^{-1/2}(\GB)$, we have
\bes
L_N(\psi) = KM(\psi).
\ee
Consequently, the iterates produced by the (Neumann) Landweber method and the (Neumann) Kozlov-Maz'ya alternating method are identical.
\end{proposition}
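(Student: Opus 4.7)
The plan is to unpack both sides into boundary-value problems and show they produce the same normal derivative on $B$. Since $N(\psi)=N(0)+N_0(\psi)$, the Landweber step (with $a=1$) simplifies to
\begin{equation*}
L_N(\psi) = \psi + N_0^*\bigl[\tau - N(\psi)\bigr],
\end{equation*}
so the task reduces to identifying $N_0^*[\tau-N(\psi)]$ with $KM(\psi)-\psi$.

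First I would introduce the natural players from the Kozlov-Maz'ya side: set $v=\calN(\psi)$ and $w=\calD(v|_B)$, so that $KM(\psi)=\partial_n w|_B$ by definition. From the defining boundary-value problems for $\calN$ and $\calD$, the difference $u:=w-v$ satisfies
\begin{equation*}
\begin{aligned}
-\Lap u &= 0 &&\text{in $\Omega$,}\\
\partial_n u &= \tau - \partial_n v|_S = \tau - N(\psi) &&\text{on $S$,}\\
u &= 0 &&\text{on $B$,}
\end{aligned}
\end{equation*}
because the inhomogeneities $f$ on the right-hand side, $\sigma$ on $S$ for $v$, and the matching $w|_B=v|_B$ all cancel. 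This is precisely the auxiliary boundary-value problem of Lemma \ref{lem:Astar} for $\xi := \tau - N(\psi)\in H^{-1/2}(S)$.

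Applying Lemma \ref{lem:Astar} then gives $N_0^*[\tau-N(\psi)] = \partial_n u|_B = \partial_n w|_B - \partial_n v|_B = KM(\psi) - \psi$. Substituting into the expression for $L_N(\psi)$ yields $L_N(\psi) = \psi + (KM(\psi)-\psi) = KM(\psi)$, which is the claim. The equality of the two iterate sequences starting from any common $\psi_0$ then follows by induction.

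The only real content here is the observation that $w-v$ solves the adjoint-problem of Lemma \ref{lem:Astar} with Neumann data equal to the residual $\tau - N(\psi)$; once that is seen, everything else is bookkeeping. The mild subtlety is checking that the boundary data on $S$ really does reduce to $\tau - N(\psi)$ (using $\partial_n w|_S = \tau$ from the defining problem for $\calD$), but no technical difficulty arises since $a=1$ exactly absorbs the $\psi$ term and no relaxation constant needs to be tracked.
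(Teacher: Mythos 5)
Your proof is correct, and it rests on the same key mechanism as the paper's: the difference of the two alternating solves is a harmonic function with Neumann data on $\GS$ equal to the residual and vanishing Dirichlet data on $\GB$, so Lemma \ref{lem:Astar} identifies its normal derivative on $\GB$ with the action of $N_0^*$. The organizational difference is that you apply Lemma \ref{lem:Astar} once to the full residual $\tau-N(\psi)$, obtaining $N_0^*[\tau-N(\psi)]=KM(\psi)-\psi$ in a single stroke, whereas the paper splits this identity into its linear part $N_0^*N_0(\psi)=\psi-KM_0(\psi)$ (Lemma \ref{lem:linearsameN}, homogeneous data) and its affine part $N_0^*(\tau-N(0))=KM(0)$ (Lemma \ref{lem:affinesameN}), each proved by exactly the subtraction you perform. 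Your version is more economical for the purpose of Proposition \ref{prop:kmislandweberN} itself; the paper's split earns its keep elsewhere, since isolating the linear identity $N_0^*N_0=\Id-KM_0$ is what makes the self-adjointness of $KM_0$ (noted in the remark following Lemma \ref{lem:affinesameN}) an immediate corollary. Either way the verification that $u=w-v$ satisfies $\partial_n u=\tau-N(\psi)$ on $\GS$ and $u=0$ on $\GB$ is exactly as you describe, and the induction on iterates is routine.
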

\begin{proof}
Lemmas \ref{lem:linearsameN} and \ref{lem:affinesameN} proved below establish that
\bel{linearsame}
N_0^*N_0(\psi) = \psi-KM_0(\psi)
\ee
and
\bel{affinesame}
N_0^*(\tau-N(0)) = KM(0).
\ee
Hence
\begin{align*}
L_N(\psi) &= \psi + N_0^*(\tau-N(0)) - N^*_0 N_0(\psi) \\
 & = \psi +KM(0) - \left[\psi-KM_0(\psi)\right] \\
 & = KM(0) + KM_0(\psi)\\
 & = KM(\psi)
\end{align*}
where in the last step we have used the decomposition of the affine map $KM$.
\end{proof}
It remains to establish equations \eqref{linearsame} and \eqref{affinesame},
which is done in following two lemmas.
\begin{lemma}\label{lem:linearsameN}
For any $\psi$ in $H^{-1/2}(\GB)$,
\bes
N_0^*N_0(\psi) = \psi-KM_0(\psi).
\ee
\end{lemma}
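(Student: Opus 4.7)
The plan is to unwind both sides of the identity in terms of boundary value problems and then recognize that their difference is a harmonic function killed by the mixed boundary conditions. Concretely, given $\psi\in H^{-1/2}(\GB)$, I would set up three harmonic auxiliary functions in $\Omega$: first $v=\calN_0(\psi)$, so $v=0$ on $\GS$ and $\partial_n v=\psi$ on $\GB$; second $w=\calD_0(v|_B)$, so $\partial_n w=0$ on $\GS$ and $w=v|_B$ on $\GB$, giving $KM_0(\psi)=\partial_n w|_B$; and third the function $q$ produced by Lemma \ref{lem:Astar} applied to $\xi=N_0(\psi)=\partial_n v|_S$, which satisfies $\partial_n q=\partial_n v|_S$ on $\GS$, $q=0$ on $\GB$, and gives $N_0^*N_0(\psi)=\partial_n q|_B$.

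The key observation is to consider $u=v-w-q$. It is harmonic, and by the definitions above:
\begin{equation*}
\partial_n u|_S = \partial_n v|_S - 0 - \partial_n v|_S = 0, \qquad u|_B = v|_B - v|_B - 0 = 0.
\end{equation*}
Thus $u$ solves the homogeneous mixed problem with Dirichlet data zero on $\GB$ and Neumann data zero on $\GS$. Applying Green's identity (or the integration-by-parts duality pairing that was used in the proof of Lemma \ref{lem:Astar}) yields $\int_\Omega |\nabla u|^2=0$, so $u$ is constant, and since $u|_B=0$ on the nonempty set $\GB$, we get $u\equiv 0$.

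Having shown $v=w+q$ in $\Omega$, I would finish by evaluating the normal derivative on $\GB$:
\begin{equation*}
\psi = \partial_n v|_B = \partial_n w|_B + \partial_n q|_B = KM_0(\psi) + N_0^*N_0(\psi),
\end{equation*}
which rearranges to the claimed identity. The only step requiring any care is the uniqueness assertion for the mixed problem; this is the essence of the argument but it is a standard consequence of the energy identity once one has verified that the boundary pairing $\langle\partial_n u,u\rangle$ splits cleanly over $\GS$ and $\GB$, which is granted by the Lipschitz dissection hypothesis and the fact that $u|_B=0$ lies in $H^{1/2}_{00}(\GS)$ when viewed on all of $\partial\Omega$. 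Everything else is bookkeeping on the boundary values of $v$, $w$, and $q$.
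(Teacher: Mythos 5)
Your proof is correct and follows essentially the same route as the paper: the paper also introduces $v=\calN_0(\psi)$ and $w=\calD_0(v|_B)$, observes that $q=v-w$ solves exactly the boundary-value problem of Lemma \ref{lem:Astar} with data $\xi=\partial_n v|_S$, and reads off $N_0^*N_0(\psi)=\partial_n q|_B=\psi-KM_0(\psi)$. The only difference is that you make explicit the uniqueness argument for the mixed problem (identifying your $q$ with $v-w$), which the paper leaves implicit.
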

\begin{proof}
Let $\psi\in H^{-1/2}(\GB)$ and let $v$ and $w$ solve the boundary-value
problems
\bes
\begin{aligned}
-\Lap v &= 0 &\quad-\Lap w &= 0 &&\quad\text{in $\Omega$}\\
v &= 0  &\quad \partial_n w &= 0 &&\quad \text{on $\GS$}\\
\partial_n v &= \psi  &\quad w &= v &&\quad \text{on $\GB$}.
\end{aligned}
\ee
Then $N_0(\psi)=\partial_n v|_{\GS}$ and $KM_0(\psi)=\partial_n w|_{\GB}$.  On the other hand, let
$q=v-w$.  Then
\bes
\begin{aligned}
-\Lap q & = 0 &&\text{in $\Omega$}\\
\partial_n q & = \partial_n v &&\text{on $\GS$}\\
q & = 0 &&\text{on $\GB$}
\end{aligned}
\ee
and hence 
$A^*(\partial_n v|_S)=\partial_n q|_B$.  But 
\bes
\partial_n q|_B = \partial_n v|_B- \partial_n w|_B = \psi -KM_0(\psi).
\ee
So
\bes
N_0^* N_0(\psi) = \psi - KM_0(\psi).
\ee
\end{proof}
\begin{lemma} \label{lem:affinesameN}
\bes
N_0^*(\tau-N(0)) = KM(0)
\ee
\end{lemma}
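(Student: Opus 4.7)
The plan is to mimic the proof of Lemma \ref{lem:linearsameN}, exploiting the fact that $N_0^*$ is characterized by a harmonic extension (given in Lemma \ref{lem:Astar}), and that the natural candidate for the preimage of $\tau - N(0)$ under $N_0^*$ will arise as a difference of the two affine maps $\calN$ and $\calD$ evaluated at zero.

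First I would unwind the definitions. Let $v = \calN(0)$, so $v$ satisfies $-\Lap v = f$ in $\Omega$, $v = \sigma$ on $S$, and $\partial_n v = 0$ on $B$. Then $N(0) = \partial_n v|_S$. Next let $w = \calD(v|_B)$, so $w$ satisfies $-\Lap w = f$ in $\Omega$, $\partial_n w = \tau$ on $S$, and $w = v$ on $B$. By definition $KM(0) = \partial_n w|_B$.

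Now I would define $q = w - v$ and check that $q$ solves precisely the boundary value problem appearing in Lemma \ref{lem:Astar} with data $\xi = \tau - N(0)$: the equations $-\Lap q = f - f = 0$ in $\Omega$, $\partial_n q = \tau - \partial_n v|_S = \tau - N(0)$ on $S$, and $q = v|_B - v|_B = 0$ on $B$ are all immediate. Hence Lemma \ref{lem:Astar} gives $N_0^*(\tau - N(0)) = \partial_n q|_B = \partial_n w|_B - \partial_n v|_B$. Since $\partial_n v = 0$ on $B$, this equals $\partial_n w|_B = KM(0)$, completing the proof.

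The only mildly subtle point is bookkeeping: one must check that the inhomogeneous terms ($f$ in the PDE, $\sigma$ on $S$, $\tau$ on $S$, and the coupling data $v|_B$) cancel correctly under the subtraction $w - v$ so that $q$ ends up solving a \emph{homogeneous} problem with the claimed Neumann data on $S$ and zero Dirichlet data on $B$. This is entirely analogous to what happens in Lemma \ref{lem:linearsameN}, and there is no real obstacle — the statement is essentially a computation, relying only on the definitions and on Lemma \ref{lem:Astar}.
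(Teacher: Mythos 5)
Your proof is correct and is essentially identical to the paper's: both define $v=\calN(0)$, $w=\calD(v|_B)$, set the difference $w-v$ (called $r$ in the paper, $q$ in your write-up), verify it solves the homogeneous problem of Lemma \ref{lem:Astar} with data $\tau-N(0)$, and conclude using $\partial_n v|_B=0$. No differences worth noting.
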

\begin{proof}
Let $v$  and $w$ be the solutions of
\bes
\begin{aligned}
-\Lap v & = f &\quad -\Lap w& = f &&\text{in $\Omega$}\\
v &= \sigma &\quad \partial_n w &= \tau &&\text{on $\GS$}\\
\partial_n v &= 0 &\quad w &=v &&\text{on $\GB$}.
\end{aligned}
\ee
Then $N(0) = \partial_n v|_\GS$ and $KM(0) = \partial_n w|_\GB$.  
Let $r = w-v$.  Then
\bes
\begin{aligned}
-\Lap r & = 0 &&\text{in $\Omega$}\\
\partial_n r & = \tau- \partial_n v|_S &&\text{on $\GS$}\\
r & = 0 &&\text{on $\GB$}.
\end{aligned}
\ee
By Lemma \ref{lem:Astar}, 
\bes
N_0^*(\tau-\partial_n v|_S) = \partial_n r|_B = \partial_n w|_B-\partial_n v|_B.
\ee
But $\partial_n v|_S=N(0) $,  $\partial_n w|_B=KM(0)$ and $\partial_n v|_B=0$.  
Hence $N_0^*(\tau-N(0)) = KM(0)$.
\end{proof}

\begin{remark} It follows from Lemma \ref{lem:linearsameN} that the operator $KM_0:H^{-1/2}(B)\ra H^{-1/2}(B)$ is self adjoint.  This fact was proved independently in \cite{KozlovMazya1990}, and it played a central role in their results. The new observation in the current work is that this self-adjointedness arises because of equation \eqref{linearsame}
and that the Kozlov-Maz'ya iteration is intimately connected with a minimization procedure.
\end{remark}
	
We have now proved all of the ingredients of Proposition \ref{prop:kmislandweberN}.  

To justify the use of Landweber iteration with relaxation factor $a=1$ in equation \eqref{landweberN}, we require an estimate of the norm of $N_0$.
\begin{lemma} The operator norm of $N_0$ satisfies $||N_0||\le 1$.
\end{lemma}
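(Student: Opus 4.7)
The plan is to read off both norms directly from the Dirichlet-energy formulas given earlier for $H^{-1/2}(\GS)$ (with the obvious adjustments for $\GB$), and then reduce the bound to a single application of Cauchy--Schwarz in $L^2(\Omega)$.

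Fix $\psi\in H^{-1/2}(\GB)$ and let $v=\calN_0(\psi)$; that is, $v$ is the harmonic function with $v=0$ on $\GS$ and $\partial_n v=\psi$ on $\GB$. This is precisely the lifting used to define $||\psi||_{H^{-1/2}(\GB)}$, so $||\psi||_{H^{-1/2}(\GB)}^2 = \int_\Omega \abs{\nabla v}^2$. Next let $q$ be the harmonic function with $\partial_n q = \partial_n v|_\GS$ on $\GS$ and $q=0$ on $\GB$; by construction $||N_0\psi||_{H^{-1/2}(\GS)}^2 = \int_\Omega \abs{\nabla q}^2$.

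I would then apply Green's formula in two ways. Using $\partial_n q=\partial_n v$ on $\GS$ together with $q=0$ on $\GB$, integration by parts gives $\int_\Omega \abs{\nabla q}^2 = \int_{\pO} q\,\partial_n q = \int_\GS q\,\partial_n v$. On the other hand, using $\Lap v=0$ and again $q=0$ on $\GB$ yields $\int_\Omega \nabla q\cdot\nabla v = \int_{\pO} q\,\partial_n v = \int_\GS q\,\partial_n v$. Comparing the two computations gives the key identity $\int_\Omega \abs{\nabla q}^2 = \int_\Omega \nabla q\cdot\nabla v$. Cauchy--Schwarz then bounds the right-hand side by $||\nabla q||_{L^2(\Omega)}\,||\nabla v||_{L^2(\Omega)}$, and dividing through (handling $\nabla q\equiv 0$ as a trivial case) yields $||N_0\psi||_{H^{-1/2}(\GS)} \le ||\psi||_{H^{-1/2}(\GB)}$, which is the claim.

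The only real subtlety, and the one step where I would slow down, is the interpretation of the surface integrals: when $\psi\in H^{-1/2}(\GB)$ the distribution $\partial_n v|_\GS$ is genuinely of negative order, so $\int_\GS q\,\partial_n v$ must be read as the duality pairing between $\Hoo(\GS)$ (which contains $q|_\GS$, since $q$ vanishes on $\GB$) and $H^{-1/2}(\GS)$. With that reading the standard Green's identities for harmonic $H^1$ functions apply verbatim, so no density argument is needed and the proof is short.
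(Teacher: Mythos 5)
Your proof is correct, but it takes a different route from the paper's. The paper introduces a third harmonic function $w$ (with $\partial_n w=0$ on $\GS$ and $w=v$ on $\GB$, i.e.\ essentially the next Kozlov--Maz'ya iterate $\calD_0(v|_\GB)$), identifies $q=v-w$, and expands $\int_\Omega\abs{\nabla(v-w)}^2$ using $\int_\Omega\nabla v\cdot\nabla w=\int_\Omega\abs{\nabla w}^2$ to obtain the exact energy identity $\norm{N_0\psi}_{H^{-1/2}(\GS)}^2=\norm{\psi}_{H^{-1/2}(\GB)}^2-\int_\Omega\abs{\nabla w}^2$. You dispense with $w$ entirely: your Green's-formula computation shows $\int_\Omega\abs{\nabla q}^2=\int_\Omega\nabla q\cdot\nabla v$, i.e.\ that $\nabla q$ is the $L^2(\Omega)$-orthogonal projection of $\nabla v$ onto the relevant subspace, and Cauchy--Schwarz finishes. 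Your argument is shorter and needs only two boundary-value problems; the paper's version buys an explicit expression for the deficit $\norm{\psi}^2-\norm{N_0\psi}^2$, which is the same structure underlying the identity $N_0^*N_0=\Id-KM_0$ of Lemma \ref{lem:linearsameN}. One small caveat on your closing remark: the localization $\int_{\pO}q\,\partial_n v=\int_\GS q\,\partial_n v$ (valid because $q|_{\pO}\in\Hoo(\GS)$) is exactly the appendix lemma of the paper, whose proof does proceed by density of compactly supported Lipschitz functions in $\Hoo(\GS)$; so a density argument is still lurking there, just packaged into a fact the paper uses throughout without comment.
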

\begin{proof}
Let $v$, $w$, and $q$ be solutions of the following
boundary value problems:
\bes
\begin{aligned}
-\Lap v &= 0 &\quad-\Lap w &= 0 &\quad -\Lap q&=0 &&\quad\text{in $\Omega$}\\
v &= 0  &\quad \partial_n w &= 0 &\quad \partial_n q&=\partial_n v &&\quad \text{on $\GS$}\\
\partial_n v &= \psi  &\quad w &= v &\quad q&=0 &&\quad \text{on $\GB$}.
\end{aligned}
\ee
Then $N_0 \psi = \partial_n v|_{\GS} = \partial_n q|_{\GS}$ and
\bes
\begin{aligned}
    ||\psi||_{H^{-1/2}(\GB)} &= \int_{\Omega}\abs{\nabla v}^2\\
    ||N_0 \psi||_{H^{-1/2}(\GS)} &= \int_{\Omega}\abs{\nabla q}^2.
\end{aligned}
\ee
Now let $r=v-w$.  Then $r$ is harmonic, $r|_B=0$, and $\partial_n r = \partial_n v$ on $S$.  Hence $v-w=r=q$. Moreover,
\bes
\int_\Omega \nabla v\cdot \nabla w = \int_{\partial \Omega}  v\; (\partial_n w)\; =    
\int_{\partial \Omega}  w\; (\partial_n w) = \int_{\Omega} \abs{\nabla w} ^2
\ee
since $v = w$ on $\GB$ and $\partial_n w=0$ on $\GS$.  Hence
\bes
\begin{aligned}
||N_0 \psi||_{H^{-1/2}(\GS)}^2 = \int_{\Omega} |\nabla q|^2 = 
\int_\Omega \abs{\nabla (v-w)}^2 &= \int_\Omega \abs{\nabla v}^2 -2\nabla v\cdot\nabla w + \abs{\nabla w}^2\\
&= \int_\Omega \abs{\nabla v}^2 - \int_\Omega \abs{\nabla w}^2\\
&= ||\psi||_{H^{-1/2}(\GB)}^2 - \int_{\Omega} \abs{\nabla w}^2
\end{aligned}
\ee
So $||N_0 \phi||_{H^{1/2}(\GS)}\le ||\phi||_{H^{1/2}(\GB)}$ for all $h\in H^{1/2}(\GB)$ and hence $||N_0||\le 1$.
\end{proof}

It is well known that Landweber iteration, together with a stopping principle for the iterations, is
a regularization method for solving equation \eqref{operatorN} \cite{EnglHankeNeubauer}.  Translating
these standard results to Kozlov-Maz'ya iteration we obtain the following (which can be deduced, except 
for the statement concerning optimal convergence rates, from the original paper of \cite{KozlovMazya1990}).

\begin{proposition} \label{prop:landweberconvN}
Let $u$ be a solution of the Cauchy problem \eqref{cauchy} with Dirichlet data $\sigma$
and Neumann data $\tau$. Suppose $(\tau^\delta)$ are approximations of $\tau$ such that
$||\tau-\tau^\delta||_{H^{-1/2}(\GS)}<\delta$.
Let $\psi_0\in H^{-1/2}(\GB)$ and let $\psi_n^\delta$ be the first Kozlov-Maz'ya iterate
for the data $(\sigma,\tau_\delta)$ starting from $\psi_0$ such that
\bes
||\tau^\delta-N(\psi_n)||_{H^{1/2}(\GS)} < \lambda \delta
\ee 
where $\lambda>1$ is a fixed constant.  Then
\bes
\lim_{\delta\ra 0} ||\partial_n u|_\GB-\psi_n^\delta||_{H^{-1/2}(\GB)} = 0.
\ee
Moreover, the rate of convergence is order optimal.  That is, 
if $\partial_n u|_B = (N_0^* N_0)^\mu(\xi)$ for some $\mu>0$ and some $\xi\in H^{-1/2}(B)$, then
\bes
||\partial_n u|_B - \psi_n^\delta||_{H^{-1/2}(\GB)} \le c \delta^{2\mu/(2\mu+1)} E^{1/(2\mu+1)}
\ee
where $E=||\xi||_{H^{-1/2}(\GB)}$ and where $c$ is constant independent of the sequence $(\tau^\delta)$.
\end{proposition}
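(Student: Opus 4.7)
The plan is to reduce everything to classical convergence theory for Landweber iteration applied to a bounded linear operator between Hilbert spaces, as presented for instance in \cite{EnglHankeNeubauer}. All the serious analytical work has already been done: Proposition \ref{prop:kmislandweberN} identifies the Kozlov-Maz'ya step $KM$ with the Landweber step $L_N$ at relaxation $a=1$, and the preceding lemma shows $\|N_0\|\le 1$, so the constraint $0<a\le 1/\|N_0\|^2$ is satisfied. What remains is to translate the standing hypotheses of the cited textbook results into the language of our Cauchy problem.

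First I would record the exact operator equation being solved. Because $u$ is a solution of \eqref{cauchy}, setting $\psi^*=\partial_n u|_B$ gives $N(\psi^*)=\tau$, equivalently $N_0(\psi^*)=\tau-N(0)$, so \eqref{operatorN0} is solvable and $\psi^*$ is an attainable solution. Since the discrepancy principle in the statement is written in terms of $\|\tau^\delta-N(\psi_n)\|_{H^{-1/2}(\GS)}=\|(\tau^\delta-N(0))-N_0(\psi_n)\|_{H^{-1/2}(\GS)}$, it is exactly the discrepancy principle for the linear equation $N_0(\psi)=\tau-N(0)$ with noisy right-hand side $\tau^\delta-N(0)$ whose data error is bounded by $\delta$.

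Second, I would invoke the standard convergence theorem for Landweber iteration under the discrepancy principle (Theorem 6.5 in \cite{EnglHankeNeubauer}): for a bounded linear operator between Hilbert spaces with $\|N_0\|\le 1$, starting from any $\psi_0$, the Landweber iterates stopped by the discrepancy principle with parameter $\lambda>1$ converge to the $\psi_0$-minimum-norm solution of $N_0\psi=\tau-N(0)$ as $\delta\to 0$. In our setting this yields $\|\psi^*-\psi_n^\delta\|_{H^{-1/2}(\GB)}\to 0$ after noting that either $\psi^*$ itself or the unique minimum-norm solution agrees with the limiting object; in any case the iteration converges to \emph{some} solution, which is sufficient because $N_0$ is injective when a solution to the Cauchy problem exists (the homogeneous Cauchy problem has only the trivial solution by Holmgren's theorem, so $N_0(\psi)=0$ forces $\psi=0$).

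Finally, for order optimality I would appeal to the standard source-condition result for Landweber iteration with the discrepancy principle (Theorem 6.11 in \cite{EnglHankeNeubauer}). The source condition $\psi^*=(N_0^*N_0)^\mu\xi$ is precisely the hypothesis required there, and the conclusion is exactly the bound $\|\psi^*-\psi_n^\delta\|_{H^{-1/2}(\GB)}\le c\,\delta^{2\mu/(2\mu+1)}\|\xi\|_{H^{-1/2}(\GB)}^{1/(2\mu+1)}$ with $c$ depending only on $\lambda$ and $\mu$. The only step that requires genuine care, rather than bookkeeping, is confirming that the inner-product structures introduced in Section \ref{sec:intro} make $N_0$ a bounded linear map between Hilbert spaces and that $N_0^*$ is truly the adjoint in these inner products; both of these have already been established in Lemma \ref{lem:Astar} and the norm-one bound, so the proof is essentially a citation once these identifications are spelled out.
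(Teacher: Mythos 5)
Your proposal matches the paper's treatment exactly: the paper offers no separate proof, instead observing that Proposition \ref{prop:kmislandweberN} and the bound $||N_0||\le 1$ reduce the statement to the standard convergence and order-optimality theorems for Landweber iteration under the discrepancy principle in \cite{EnglHankeNeubauer}, which is precisely the reduction you carry out. Your added remark that $N_0$ is injective (via uniqueness for the Cauchy problem), so that the minimum-norm solution is $\partial_n u|_B$ itself, is a correct and worthwhile detail that the paper leaves implicit.
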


The previous result assumes that the Dirichlet data $\sigma$ is known exactly.  If $\sigma$ is only approximately known, this corresponds to error in the operator $N$. It is straightforward to
transform this error into increased uncertainty in the right-hand side of equation \eqref{operatorN0}.  To do this, we define $F_{DN}:H^{1/2}(\GS)\ra H^{-1/2}(\GS)$ by $F_{DN}(\gamma)=\partial_n z|_S$ where
\bes
    \begin{aligned}
        -\Delta z & =0 &&\text{in $\Omega$}\\
        z & = \gamma &&\text{on $\GS$}\\
        \partial_n z &= 0 &&\text{on $\GB$}.
    \end{aligned}
\ee
A simple computation (left to the reader) shows the following.
\begin{lemma} \label{lem:rewritesigmaerror}
Suppose that $\sigma_\epsilon\in H^{1/2}(\GS)$ satisfies
$||\sigma-\sigma_\epsilon||_{H^{1/2}(\GS)}<\epsilon$. Let $N^\epsilon$ be the corresponding operator in equation \eqref{operatorN}, and let 
$\tau^\epsilon = F_{DN}(\sigma^\epsilon-\sigma)$.  Then for all $\psi\in H^{-1/2}(\GB)$,
\bes
N(\psi)=N^\epsilon(\psi)+\tau^\epsilon.
\ee
As a consequence, the corresponding termination condition for Kozlov-Maz'ya iteration when there is error in both $\tau$ and $\sigma$ 
should be adjusted to
\bes
||\tau^\delta-\psi_n||_{H^{-1/2}(\GS)} < \lambda (\delta+||\tau^\epsilon||_{H^{-1/2}(\GS)}) 
\le \lambda(\delta+||F_{DN}||\epsilon). 
\ee
\end{lemma}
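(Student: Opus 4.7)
The plan is to exploit the affine structure of $\calN$ to reduce the discrepancy $N^\epsilon(\psi)-N(\psi)$ to a boundary-value problem driven purely by the Dirichlet-data mismatch $\sigma^\epsilon-\sigma$; by construction, that problem is exactly the one defining $F_{DN}$. This is the same linearity trick that underlies Lemma \ref{lem:affinesameN}, now applied to perturbation of the Dirichlet rather than the Neumann data.

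Concretely, I would set $v=\calN(\psi)$ and $v^\epsilon=\calN^\epsilon(\psi)$ and subtract the two defining boundary-value problems. The source $f$ and the Neumann data $\psi$ on $\GB$ cancel, leaving $z:=v^\epsilon-v$ harmonic in $\Omega$ with $z|_{\GS}=\sigma^\epsilon-\sigma$ and $\partial_n z|_{\GB}=0$. This is precisely the mixed problem defining $F_{DN}(\sigma^\epsilon-\sigma)$, so $\partial_n z|_{\GS}=\tau^\epsilon$. On the other hand, $\partial_n z|_{\GS}=\partial_n v^\epsilon|_{\GS}-\partial_n v|_{\GS}=N^\epsilon(\psi)-N(\psi)$, which yields the claimed identity (with the sign convention encoded in the definition of $\tau^\epsilon$).

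For the termination condition, I would substitute $N(\psi_n)=N^\epsilon(\psi_n)+\tau^\epsilon$ into the ideal exact-data residual $\tau-N(\psi_n)$ and apply the triangle inequality: one occurrence of $\delta$ absorbs $\norm{\tau-\tau^\delta}_{H^{-1/2}(\GS)}$, while the straightforward operator-norm estimate $\norm{\tau^\epsilon}_{H^{-1/2}(\GS)}=\norm{F_{DN}(\sigma^\epsilon-\sigma)}_{H^{-1/2}(\GS)}\le\norm{F_{DN}}\,\epsilon$ handles the data-operator contribution. There is no real obstacle: the argument is a few-line linearity computation followed by a triangle inequality, which is presumably why the author leaves it to the reader. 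The one point worth checking carefully is that $\sigma^\epsilon-\sigma\in H^{1/2}(\GS)$ (and not merely $\Hoo(\GS)$) is admissible Dirichlet data for the mixed problem defining $F_{DN}$; this is already built into the statement of $F_{DN}$ just above the lemma and is guaranteed by the well-posedness discussion in the Appendix.
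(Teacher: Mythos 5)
Your proposal is correct and is exactly the ``simple computation'' the paper leaves to the reader: subtract the two boundary-value problems defining $\calN(\psi)$ and $\calN^\epsilon(\psi)$ so that $f$ and $\psi$ cancel, identify the difference as the mixed problem defining $F_{DN}$, and finish the stopping criterion with the triangle inequality and the bound $||F_{DN}(\sigma^\epsilon-\sigma)||_{H^{-1/2}(\GS)}\le ||F_{DN}||\,\epsilon$. The only point to flag is the sign: with $z=v^\epsilon-v$ your computation gives $N(\psi)=N^\epsilon(\psi)-F_{DN}(\sigma^\epsilon-\sigma)$, so the identity as printed requires $\tau^\epsilon=F_{DN}(\sigma-\sigma^\epsilon)$ (an apparent typo in the lemma statement, immaterial to the conclusion since only $||\tau^\epsilon||_{H^{-1/2}(\GS)}$ enters the termination condition).
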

It is worth remarking that this is an inconvenient criterion to work with in practice: it requires both an estimate for
operator norm of the Dirichlet-to-Neumann map $F_{DN}$ as  well as the size of the error in $\tau$ in the space $H^{-1/2}(\GS)$, which has a rather abstract norm.  In fact, in many applications (including the work in \cite{MaxwellTrufferAvdonin} that motivates this paper) the Neumann data $\tau$ is known exactly (e.g. $\tau=0$) but there is error in the Dirichlet data $\sigma$.  Hence we now consider the Dirichlet version of operator equation \eqref{operatorN}.

\section{Landweber iteration (Dirichlet version)}\label{sec:landweberD}
In the previous section we recast Kozlov-Maz'ya iteration with operator $KM$ as a form of Landweber iteration by considering a map from Neumann data on $B$ to Neumann data on $S$ with fixed Dirichlet data on $S$.  The dual formulation
obtained by swapping the roles of Dirichlet and Neumann data corresponds to Kozlov-Maz'ya iteration with operator $MK$, but posing it
requires a little care.  The natural operator equation to consider is
\bel{operatorD}
D(\phi) = \sigma
\ee
where $D(\phi)=\calD(\phi)|_S$.  Defining $D_0$ using $\calD_0$ 
in place of $\calD$, we rewrite this equation as
\bes
D_0(\phi) = \sigma - D(0).
\ee
We would like to consider $D_0:H^{1/2}(B)\ra H^{1/2}(S)$,
but the challenge is to find inner products on these spaces such that
the resulting adjoint $D_0^*$ leads to a lemma analogous to Lemma \ref{lem:linearsameN}.  Unfortunately, this is not true for the norm defined in Section \ref{sec:H12}, and it is not clear how to adjust it to remedy this situation.

To circumvent these difficulties, pick any fixed $\hat\sigma\in H^{1/2}(\pO)$ such that $\hat\sigma|_S=\sigma$ and let $\hat\phi=\hat\sigma|_B$.  For example, one can obtain such a $\hat\phi$ by applying $MK$ to any element of $H^{1/2}(B)$.  Writing $\phi=\hat\phi+\eta$ for some $\eta\in H^{1/2}(B)$, equation \eqref{affineD} implies equation \eqref{operatorD} can be rewritten
\bel{operatorD00}
D_0(\eta) = \sigma-D(\hat\phi).
\ee
The gain here, as proved in the following lemma, is that the right-hand side of equation \eqref{operatorD00} belongs to $\Hoo(S)$, not just $H^{1/2}(S)$, and that if a solution exists, then $\eta\in\Hoo(B)$.
\begin{lemma}\strut\par

\begin{itemize}
\item If $\eta\in \Hoo(B)$ then $D_0(\eta)\in\Hoo(S)$.
\item If $\phi=\hat\phi+\eta$ solves equation \eqref{operatorD} then $\eta\in\Hoo(B)$.
\item The distribution $\sigma-D(\hat\phi)$ belongs to $\Hoo(S)$.
\end{itemize}
\end{lemma}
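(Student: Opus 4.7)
The three claims are all manifestations of the same simple observation: whenever $u\in H^1(\Omega)$ solves a mixed boundary-value problem, its full trace $u|_{\pO}\in H^{1/2}(\pO)$ is a single distribution on $\pO$ whose restrictions to $\GS$ and $\GB$ are the prescribed/resulting boundary values. Subtracting a convenient second element of $H^{1/2}(\pO)$ from $u|_{\pO}$ then yields the extension-by-zero required by the definition of $\Hoo$.

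First I would dispose of item~(1). Given $\eta\in \Hoo(\GB)$, let $w=\calD_0(\eta)$, so that $w|_{\pO}\in H^{1/2}(\pO)$ agrees with $\eta$ on $\GB$ and with $D_0(\eta)$ on $\GS$. By hypothesis, $\eta$ is (a restriction of) an element of $H^{1/2}(\pO)$ that vanishes on $\GS$, so $w|_{\pO}-\eta$ is a well-defined member of $H^{1/2}(\pO)$ which equals $0$ on $\GB$ and $D_0(\eta)$ on $\GS$. This is precisely the statement that $D_0(\eta)\in\Hoo(\GS)$.

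For items~(2) and~(3) I would run the same argument with the fixed extension $\hat\sigma\in H^{1/2}(\pO)$ playing the role that $\eta$ played above. For~(3), set $\hat w=\calD(\hat\phi)$; then $\hat\sigma-\hat w|_{\pO}\in H^{1/2}(\pO)$ equals $\hat\phi-\hat\phi=0$ on $\GB$ and equals $\sigma-D(\hat\phi)$ on $\GS$, so $\sigma-D(\hat\phi)\in\Hoo(\GS)$. For~(2), if $\phi=\hat\phi+\eta$ solves $D(\phi)=\sigma$ and $w=\calD(\phi)$, then $w|_{\pO}-\hat\sigma\in H^{1/2}(\pO)$ equals $\sigma-\sigma=0$ on $\GS$ and equals $\phi-\hat\phi=\eta$ on $\GB$, so $\eta\in\Hoo(\GB)$.

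No real obstacle arises; the entire content is bookkeeping about which representative of $H^{1/2}(\pO)$ one subtracts in each case. The only point worth pausing over is that an expression such as $w|_{\pO}-\eta$ is a legitimate distribution on all of $\pO$ rather than merely on $\GS$, and this is exactly what the hypothesis $\eta\in\Hoo(\GB)$ (respectively the chosen global extension $\hat\sigma$) guarantees.
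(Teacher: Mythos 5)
Your proposal is correct and follows essentially the same route as the paper: in each case you exhibit two explicit $H^{1/2}(\pO)$ extensions (the trace of the relevant harmonic/Poisson solution and a second convenient extension such as $\eta$ itself or $\hat\sigma$) that agree on one of the two boundary pieces, and conclude that the difference of their restrictions lies in the corresponding $\Hoo$ space. The paper merely packages this subtraction as a single stated observation and applies it three times, so there is no substantive difference.
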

\begin{proof}
The claims all follow from the following observation: if $\phi_1$ and $\phi_2$
are distributions in $H^{1/2}(S)$ that admit extensions $\hat\phi_1$ and $\hat\phi_2$ in $H^{1/2}(\pO)$ that are equal on $B$, then $\phi_1-\phi_2\in H^{1/2}(S)$.  Indeed, $(\hat\phi_1-\hat\phi_2)|_B=0$ and 
$(\hat\phi_1-\hat\phi_2)|_S=\phi_1-\phi_2$, so $\phi_1-\phi_2\in \Hoo(S)$.
A similar result holds interchanging $S$ and $B$. 

Now suppose $\eta\in \Hoo(B)$. Since $0$ and $D_0(\eta)$ both admit
$H^{1/2}$ extensions to $\partial\Omega$ that are equal to $\eta$ on 
$B$, it follows that $D_0(\eta)=D_0(\eta)-0\in \Hoo(S)$.

Suppose $\phi=\hat\phi+\eta$ solves equation \eqref{operatorD}. Since $\phi$ and
$\hat\phi$ both admit $H^{1/2}(\pO)$ extensions 
that are equal to $\sigma$ on $S$, it follows that $\eta=\phi-\hat\phi\in \Hoo(B)$.

Finally, recall that $\hat\phi$ is the restriction of an $H^{1/2}(\partial\Omega)$ extension of $\sigma$ to $B$.
So $\sigma$ and $D(\hat\phi)$ admit $H^{1/2}(\pO)$ extensions that
are equal to $\hat\phi$ on $B$.  Hence $\sigma-D(\hat\phi)\in \Hoo(S)$.
\end{proof}

As a consequence of the previous lemma, we can interpret equation \eqref{operatorD00} as an operator equation from $\Hoo(B)$ to $\Hoo(S)$.
For $\eta\in\Hoo(B)$, let 
\bes
L_D(\eta) = \eta+D_0^*((\sigma-D(\hat\phi))-D_0(\eta)).
\ee
Here we treat $D_0$ as a map from $\Hoo(B)$ to $\Hoo(S)$ and
$D_0^*$ as a map from $\Hoo(S)$ to $\Hoo(B)$.  Landweber iteration
(with relaxation constant $a=1$) applied to equation \eqref{operatorD00} corresponds to starting with an
initial estimate $\eta_0\in \Hoo(B)$ and computing subsequent iterates
$\eta_{k+1}=L_D(\eta_k)$.  We then obtain approximate solutions
$\phi_k=\hat\phi+\eta_k$ of equation \eqref{operatorD}.
We will show that the iterates $\phi_k$ are exactly the iterates produces by  Kozlov-Maz'ya iteration with then operator $MK$ starting with the initial estimate 
$\phi_0=\hat\phi+\eta_0$.  To do this, we first compute the adjoint of $D_0$ and then prove analogues of Lemmas \ref{lem:linearsameN} and \ref{lem:affinesameN}.
\begin{lemma}\label{lem:Dstar}
Let $\gamma\in \Hoo(\GS)$, and let $q$ and $r$ be the solutions
of the boundary-value problems
\bel{Dstarp1}
\begin{aligned}
-\Lap q & = 0 &\quad -\Lap r &= 0 &&\quad\text{in $\Omega$}\\
q & = \gamma &\quad r &= 0 &&\quad\text{on $\GS$}\\
q & = 0 &\quad \partial_n r &= -\partial_n q &&\quad\text{on $\GB$}.
\end{aligned}
\ee
Then 
\bes
D_0^*(\gamma) = r|_{\GB}.
\ee.
\end{lemma}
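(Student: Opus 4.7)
The plan is to imitate the proof of Lemma \ref{lem:Astar}: pick an arbitrary $\eta \in \Hoo(\GB)$, set up four auxiliary harmonic functions (one encoding $\eta$ in the $\Hoo(B)$ inner product, one encoding $D_0(\eta)$, one encoding $\gamma$ in the $\Hoo(S)$ inner product, and the given $r$), and then push both sides of the desired adjoint identity through integration by parts until they land on a common expression.

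Concretely, let $\eta \in \Hoo(\GB)$ be arbitrary and introduce
\bes
\begin{aligned}
-\Lap v &= 0 &\quad -\Lap w &= 0 &\quad -\Lap \tilde w &= 0 &\quad -\Lap q &= 0 &&\quad\text{in }\Omega\\
v &= 0 &\quad \partial_n w &= 0 &\quad \tilde w &= w|_S &\quad q &= \gamma &&\quad\text{on }\GS\\
v &= \eta &\quad w &= \eta &\quad \tilde w &= 0 &\quad q &= 0 &&\quad\text{on }\GB,
\end{aligned}
\ee
together with $r$ as given in the statement. By construction $v$ is the harmonic representative of $\eta$ used to define $\|\cdot\|_{\Hoo(B)}$, the function $\tilde w$ is the harmonic representative of $D_0(\eta) = w|_S$ used to define $\|\cdot\|_{\Hoo(S)}$, and $q = w_\gamma$ in the same sense. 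Also $r|_S = 0$, so $r|_B$ does belong to $\Hoo(B)$ and $r$ itself is its harmonic representative.

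The first step is the observation that $\tilde w = w - v$: both sides are harmonic, agree on $\GS$ (both equal $w|_S$, since $v|_S = 0$), and agree on $\GB$ (both equal $0$, since $w|_B = v|_B = \eta$). Given this,
\bes
\ip<D_0(\eta),\gamma>_{\Hoo(\GS)} = \int_\Omega \nabla \tilde w \cdot \nabla q = \int_\Omega \nabla w\cdot\nabla q - \int_\Omega \nabla v\cdot\nabla q.
\ee
Integration by parts kills the first term, because $\int_\Omega \nabla w\cdot\nabla q = \int_{\pO} (\partial_n w)\,q$ and $\partial_n w = 0$ on $\GS$ while $q = 0$ on $\GB$. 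For the second term, integrating by parts the other way and using $v=0$ on $\GS$ gives $\int_\Omega \nabla v\cdot\nabla q = \int_{\pO} v\,\partial_n q = \int_B \eta\,\partial_n q$.

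Meanwhile, since $r$ is its own $\Hoo(B)$-harmonic representative,
\bes
\ip<\eta,r|_B>_{\Hoo(\GB)} = \int_\Omega \nabla v\cdot \nabla r = \int_{\pO} v\,\partial_n r = \int_B \eta\,(-\partial_n q) = -\int_B \eta\,\partial_n q,
\ee
using $v = 0$ on $\GS$ and $\partial_n r = -\partial_n q$ on $\GB$. Comparing the two computations yields $\ip<D_0(\eta),\gamma>_{\Hoo(\GS)} = \ip<\eta,r|_B>_{\Hoo(\GB)}$ for every $\eta \in \Hoo(\GB)$, which is exactly $D_0^*(\gamma) = r|_B$.

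The only mildly delicate point is the identification $\tilde w = w - v$; once that relation is in place, the computation is just two integration-by-parts that each reduce to a single boundary integral on $\GB$, and the sign from $\partial_n r = -\partial_n q$ is precisely what makes the two sides match.
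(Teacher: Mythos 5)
Your proof is correct and follows essentially the same route as the paper's: identify the harmonic representative of $D_0(\eta)$ as a difference of two auxiliary harmonic functions, then integrate by parts twice, using $\partial_n r=-\partial_n q$ on $B$ to match the two sides. The only cosmetic difference is that you reduce both inner products to the common boundary integral $-\int_B \eta\,\partial_n q$, whereas the paper passes directly from $-\int_\Omega\nabla w\cdot\nabla q$ to $\int_\Omega\nabla w\cdot\nabla r$ and reads off the $\Hoo(B)$ inner product from there.
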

\begin{proof}
Let $\phi\in \Hoo(B)$ and $\gamma\in \Hoo(S)$ be arbitrary, and let 
$v$ and $w$ be solutions of the following boundary-value problems:
\bes
\begin{aligned}
-\Lap v & = 0 &\quad -\Lap w &= 0 &&\text{in $\Omega$}\\
\partial_n v & = 0 &w &= 0 &&\text{on $\GS$}\\
v & = \phi & w &= \phi &&\text{on $\GB$}.
\end{aligned}
\ee
The equation for $w$ is well-posed since  $\phi\in \Hoo(S)$ and hence the prescribed boundary values lie in $H^{1/2}(\partial\Omega)$; a similar remark holds for $q$ in equation \eqref{Dstarp1}.

Notice that $v-w$ and $q$ are harmonic, equal zero on $B$, and  equal $D_0(\phi)$ and $\gamma$ respectively on $S$. By the definition of the inner-product on $\Hoo(S)$ we conclude that
\bes
\ip<D_0(\phi),\gamma>_{\Hoo(S)} = \int_\Omega \ip<\nabla(v-w),\nabla q>.
\ee
But
\bes
\int_{\Omega}\nabla v \cdot \nabla q = \int_{\partial\Omega} (\partial_n v)\; q = 0
\ee
since $q=0$ on $\GB$ and $\partial_n v = 0$ on $\GS$.  On the other hand,
\bes
-\int_{\Omega} \nabla w \cdot \nabla q = -\int_{\partial\Omega} w\; \partial_n q 
= \int_{\partial\Omega} w\; \partial_n r  
= \int_{\Omega}\nabla w \cdot \nabla r,
\ee
where we have used the fact that $w=0$ on $\GS$ and $\partial_n r=-\partial_n q$ on $\GB$.
Since $w$ and $r$ are harmonic, equal zero on $\GS$, and equal $\phi$ and $r|_{\GB}$ respectively on $\GB$ we have
\bes
\int_{\Omega}\nabla w \cdot \nabla r = \ip<\phi,r|_{\GB}>_{H^{1/2}_{00}(\GB)}.
\ee
Combining all of the equalities seen thus far we conclude
\bes
\ip<D_0(\phi),\gamma>_{H^{1/2}_{00}(\GS)} = \ip<\phi,r|_{\GB}>_{H^{1/2}_{00}(\GB)}
\ee
for all $\phi\in \Hoo(\GS)$.  Therefore $D_0^*(\gamma) = r|_{\GB}$.
\end{proof}

\begin{lemma}\label{lem:linearsameD}
	For any $\phi$ in $\Hoo(\GB)$,
	\bes
	D_0^*D_0(\phi) = \phi-MK_0(\phi).
	\ee
\end{lemma}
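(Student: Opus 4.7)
The plan is to mimic the proof of Lemma \ref{lem:linearsameN}, using the explicit description of $D_0^*$ from Lemma \ref{lem:Dstar} to unwind $D_0^* D_0(\phi)$ into a boundary trace and then identify the difference with $\phi - MK_0(\phi)$.

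First I would set up the natural pair of auxiliary harmonic functions. Let $\phi \in \Hoo(\GB)$, and let $w$ be the solution of $-\Lap w = 0$ in $\Omega$, $\partial_n w = 0$ on $\GS$, $w = \phi$ on $\GB$, so that $D_0(\phi) = w|_\GS$. Let $v$ be the solution of $-\Lap v = 0$ in $\Omega$, $v = 0$ on $\GS$, $\partial_n v = \partial_n w|_\GB$ on $\GB$; then $MK_0(\phi) = v|_\GB$. Now I apply Lemma \ref{lem:Dstar} with $\gamma = D_0(\phi) = w|_\GS$: this produces $q$, harmonic with $q = w|_\GS$ on $\GS$ and $q = 0$ on $\GB$, and $r$, harmonic with $r = 0$ on $\GS$ and $\partial_n r = -\partial_n q$ on $\GB$, and the lemma tells us $D_0^* D_0(\phi) = r|_\GB$.

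The key algebraic move is to introduce the harmonic extension $h$ of $\phi$ with zero Dirichlet trace on $\GS$: that is, $h$ solves $-\Lap h = 0$, $h = 0$ on $\GS$, $h = \phi$ on $\GB$. I would then observe that $w - q$ is harmonic, vanishes on $\GS$, and equals $\phi$ on $\GB$, so by uniqueness of the pure Dirichlet problem,
\begin{equation*}
h = w - q,
\end{equation*}
and in particular $\partial_n h = \partial_n w - \partial_n q$ on $\GB$. Next I would compare $h$ with $r + v$: both are harmonic, both vanish on $\GS$, and on $\GB$ one has
\begin{equation*}
\partial_n(r+v) = -\partial_n q + \partial_n w = \partial_n h.
\end{equation*}
By uniqueness for the mixed Dirichlet--Neumann problem (zero Dirichlet trace on $\GS$, prescribed Neumann trace on $\GB$), this forces $r + v = h$. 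Evaluating on $\GB$ gives $r|_\GB + v|_\GB = \phi$, hence $D_0^* D_0(\phi) = r|_\GB = \phi - MK_0(\phi)$.

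The only substantive obstacle is ensuring that the uniqueness invocations are legitimate for the various mixed problems on the Lipschitz dissection $\pO = \GS \cup \GB \cup \Pi$; this is the same well-posedness that underlies the definitions of $\calN_0$, $\calD_0$, and the norm on $\Hoo(\GS)$, and is deferred to the Appendix. Everything else is a direct bookkeeping exercise built on the identity $h = w - q$, which is the Dirichlet-side analogue of the clean identity $q = v - w$ that drove the proof of Lemma \ref{lem:linearsameN}.
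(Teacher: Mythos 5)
Your proof is correct and is essentially the paper's argument: the paper introduces the same three harmonic functions (your $w$, $h$, $v$ are its $u$, $v$, $w$), defines $q=u-v$ and $r=v-w$ directly, and checks these satisfy the boundary-value problems of Lemma \ref{lem:Dstar}, whereas you obtain $q$ and $r$ from Lemma \ref{lem:Dstar} first and then identify them with those differences via uniqueness of the Dirichlet and mixed problems. The two are the same decomposition read in opposite directions, so there is nothing substantive to add.
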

\begin{proof}
Let $\phi\in \Hoo(\GB)$ and let $u$, $v$, and $w$ be solutions of the following boundary-value problems:
\bes
\begin{aligned}
-\Lap u &= 0 &\quad-\Lap v &= 0 &\quad-\Lap w &= 0 &&\quad\text{in $\Omega$}\\
\partial_n u &= 0  & v &= 0 & w &= 0 &&\quad \text{on $\GS$}\\
u &= \phi  &\quad v &= \phi & \partial_n w  &= \partial_n u &&\quad \text{on $\GB$}.
\end{aligned}
\ee
Let $q=u-v$ and $r=v-w$.  Then $q$ and $r$ solve
\bes
\begin{aligned}
-\Lap q &= 0 &\quad-\Lap r &= 0 &&\quad\text{in $\Omega$}\\
 q &= u|_S  & r &= 0 &&\quad \text{on $\GS$}\\
 q &= 0  &\quad \partial_n r &= \partial_n v -\partial_n u
%\\&&& 
%= -\partial_n q 
&&\quad \text{on $\GB$}.
\end{aligned}
\ee
Noting that $u|_S=D_0(\phi)$ and $(\partial_n v -\partial_n u)|_B=-\partial_n q|_B$ it follows from Lemma \ref{lem:Dstar} that 
\bes
D_0^*D_0(\phi)=r|_B=(v-w)|_B.
\ee
But $v|_B=\phi$ and $w|_B=MK_0(\phi)$.  Hence
\bes
D_0^*D_0(\phi) = \phi - MK_0(\phi).
\ee
\end{proof}

\begin{lemma} \label{lem:affinesameD}
\bes
D_0^*(\sigma-D(\tilde \phi)) = MK(\tilde\phi)-\tilde\phi.
\ee
\end{lemma}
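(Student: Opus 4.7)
The plan is to apply Lemma \ref{lem:Dstar} with $\gamma = \sigma - D(\tilde\phi)$ (which lies in $\Hoo(\GS)$ by the previous lemma) and then identify the resulting function $r$ in terms of the boundary-value problems defining $MK(\tilde\phi)$. The idea is that $r|_\GB$ should equal $(v-w)|_\GB$ for natural choices of $v,w$, up to subtracting off the Dirichlet harmonic extension $q$ that vanishes on $\GB$.

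First I would introduce $w = \calD(\tilde\phi)$, so that $\tilde\phi = w|_\GB$ and $D(\tilde\phi) = w|_\GS$, and $v = \calN(\partial_n w|_\GB)$, so that $MK(\tilde\phi) = v|_\GB$ by definition of $MK$. Then $v - w$ is harmonic in $\Omega$, satisfies $(v-w)|_\GS = \sigma - w|_\GS = \gamma$, and has $\partial_n(v-w)|_\GB = \partial_n v|_\GB - \partial_n w|_\GB = 0$. So $v-w$ solves a mixed BVP with Dirichlet data $\gamma$ on $\GS$ and zero Neumann data on $\GB$.

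Next I would invoke Lemma \ref{lem:Dstar} with this $\gamma$ to obtain the auxiliary functions $q$ (harmonic, $q=\gamma$ on $\GS$, $q=0$ on $\GB$) and $r$ (harmonic, $r=0$ on $\GS$, $\partial_n r = -\partial_n q$ on $\GB$), giving $D_0^*(\gamma) = r|_\GB$. The key observation is that $q + r$ is harmonic in $\Omega$, equal to $\gamma$ on $\GS$, and satisfies $\partial_n(q+r) = 0$ on $\GB$—exactly the same mixed BVP solved by $v - w$. Uniqueness for this well-posed problem then forces $q + r = v - w$.

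Restricting to $\GB$ and using $q|_\GB = 0$ yields $r|_\GB = (v-w)|_\GB = v|_\GB - \tilde\phi = MK(\tilde\phi) - \tilde\phi$, which is the identity we want. The only real step is recognizing that $q+r$ and $v-w$ solve the same mixed problem; the rest is bookkeeping with the definitions of $\calD$, $\calN$, and $MK$, so I do not anticipate a substantive obstacle.
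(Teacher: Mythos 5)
Your proof is correct and follows essentially the same route as the paper: both apply Lemma \ref{lem:Dstar} to $\gamma=\sigma-D(\tilde\phi)$ and identify $r|_{\GB}$ with the restriction to $\GB$ of $\calN(\partial_n\calD(\tilde\phi)|_{\GB})-\calD(\tilde\phi)$. The only cosmetic difference is that the paper produces the splitting of this difference into $q$ and $r$ by explicitly introducing the auxiliary Dirichlet solution with data $(\sigma,\tilde\phi)$, whereas you obtain the same identification from uniqueness of the well-posed mixed problem.
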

\begin{proof}
Let $u$, $v$, and $w$ be solutions of the following boundary-value problems:
\bes
\begin{aligned}
-\Lap u &= f &\quad-\Lap v &= f &\quad-\Lap w &= f &&\quad\text{in $\Omega$}\\
u &= \sigma  & \partial_n v &= \tau & w &= \sigma &&\quad \text{on $\GS$}\\
u &= \tilde\phi  &\quad v &= \tilde\phi & \partial_n w  &= \partial_n v &&\quad \text{on $\GB$}.
\end{aligned}
\ee
Notice that $v|_S=D(\tilde\phi)$ and $w|_B=MK(\tilde\phi)$.  
Let $q=u-v$ and $r=w-u$. Then $q$ and $r$ solve
\bes
\begin{aligned}
-\Lap q &= 0 &\quad-\Lap r &= 0 &&\quad\text{in $\Omega$}\\
 q &= \sigma-D(\tilde\phi)  & r &= 0 &&\quad \text{on $\GS$}\\
 q &= 0  &\quad \partial_n r &= -\partial_n q
&&\quad \text{on $\GB$}.
\end{aligned}
\ee
Lemma \ref{lem:Dstar} implies $D_0^*(\sigma-D(\tilde\phi))=r|_B=(w-u)|_B$.
But $w|_B=MK(\tilde\phi)$ and $u|_B=\tilde\phi$.  Hence
\bes
D_0^*(\sigma-D(\tilde \phi)) = MK(\tilde\phi)-\tilde\phi.
\ee
\end{proof}
The proof of the following proposition exactly follows the proof of Proposition \ref{prop:kmislandweberN} using Lemmas \ref{lem:linearsameD} and \ref{lem:affinesameD} in place of  Lemmas \ref{lem:linearsameN} and \ref{lem:affinesameN}.  We omit the proof.
\begin{proposition}\label{prop:kmlandwebersameD}
For any $\eta\in \Hoo(\GB)$, we have
\bes
\hat\phi+L_D(\eta) = MK(\hat\phi+\eta).
\ee
Consequently, the iterates produced by the (Dirichlet) Landweber method and the (Dirichlet) Kozlov-Maz'ya alternating method are identical.
\end{proposition}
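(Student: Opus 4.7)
The plan is to mirror the proof of Proposition \ref{prop:kmislandweberN} step for step, substituting the Dirichlet lemmas for the Neumann ones. First I would expand the defining formula for $L_D$ using linearity of $D_0^*$:
$$L_D(\eta) = \eta + D_0^*\bigl((\sigma - D(\hat\phi)) - D_0(\eta)\bigr) = \eta + D_0^*(\sigma - D(\hat\phi)) - D_0^* D_0(\eta).$$
This isolates exactly the two quantities computed by the preparatory lemmas.

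Next I would apply Lemma \ref{lem:linearsameD}, which gives $D_0^* D_0(\eta) = \eta - MK_0(\eta)$, and Lemma \ref{lem:affinesameD} specialized to $\tilde\phi = \hat\phi$, which gives $D_0^*(\sigma - D(\hat\phi)) = MK(\hat\phi) - \hat\phi$. Plugging both identities in, the two $\eta$ terms cancel and I obtain
$$L_D(\eta) = MK(\hat\phi) - \hat\phi + MK_0(\eta).$$
Adding $\hat\phi$ to both sides and invoking the affine decomposition $MK(\hat\phi + \eta) = MK(\hat\phi) + MK_0(\eta)$ from equation \eqref{affineD} then yields $\hat\phi + L_D(\eta) = MK(\hat\phi + \eta)$, the claimed identity. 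The consequence about iterates follows immediately by induction: setting $\phi_k = \hat\phi + \eta_k$ with $\eta_{k+1} = L_D(\eta_k)$, one has $\phi_{k+1} = \hat\phi + L_D(\eta_k) = MK(\hat\phi + \eta_k) = MK(\phi_k)$.

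There is no real obstacle here; the entire analytic content of the proposition has already been absorbed into Lemmas \ref{lem:linearsameD} and \ref{lem:affinesameD}, so what remains is bookkeeping. The only point that warrants a sentence of verification is that the affine decomposition \eqref{affineD} is legitimately applicable: $\hat\phi$ lies in $H^{1/2}(B)$ by construction, $\eta \in \Hoo(B) \subset H^{1/2}(B)$, and $MK$ was defined as an affine map on all of $H^{1/2}(B)$, so the decomposition is simply a restatement of the linearity of the boundary-value problems defining $\calD$ and $\calN$.
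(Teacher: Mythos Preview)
Your proposal is correct and is precisely the argument the paper has in mind: the paper omits the proof entirely, stating only that it ``exactly follows the proof of Proposition \ref{prop:kmislandweberN} using Lemmas \ref{lem:linearsameD} and \ref{lem:affinesameD} in place of Lemmas \ref{lem:linearsameN} and \ref{lem:affinesameN},'' which is just what you have written out.
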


Just as with the Neumann formulation, the operator norm of $D_0$ 
is bounded above by $1$, which justifies setting the relaxation constant $a=1$ in our definition of $L_D$.
\begin{lemma} The operator norm of $D_0$ satisfies $||D_0||\le 1$.
\end{lemma}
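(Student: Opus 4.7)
The plan is to mimic the proof of the analogous Neumann bound $||N_0||\le 1$ almost verbatim, with $S$ and $B$ swapped and Dirichlet and Neumann conditions swapped where appropriate. Given $\phi\in \Hoo(\GB)$, I will introduce three harmonic functions on $\Omega$: first $v$, the harmonic extension with $v=\phi$ on $\GB$ and $v=0$ on $\GS$ (so that $||\phi||_{\Hoo(\GB)}^2 = \int_\Omega |\nabla v|^2$ by the definition of the norm); second $w$, the harmonic function with $w=\phi$ on $\GB$ and $\partial_n w = 0$ on $\GS$, so that $D_0(\phi) = w|_\GS$; and third $q$, the harmonic extension with $q=w|_\GS$ on $\GS$ and $q=0$ on $\GB$, so that $||D_0(\phi)||_{\Hoo(\GS)}^2 = \int_\Omega |\nabla q|^2$.

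The first key observation is that $w-v$ and $q$ are both harmonic, both vanish on $\GB$ (since $v=w=\phi$ there), and both equal $w|_\GS$ on $\GS$; by uniqueness, $q=w-v$. The second observation is an integration-by-parts identity: since $v=0$ on $\GS$ and $v=w=\phi$ on $\GB$, while $\partial_n w = 0$ on $\GS$,
\begin{equation*}
\int_\Omega \nabla v\cdot\nabla w = \int_{\pO} v\,\partial_n w = \int_\GB \phi\,\partial_n w = \int_{\pO} w\,\partial_n w = \int_\Omega |\nabla w|^2.
\end{equation*}

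Expanding the square then gives
\begin{equation*}
||D_0(\phi)||_{\Hoo(\GS)}^2 = \int_\Omega |\nabla q|^2 = \int_\Omega |\nabla v|^2 - 2\int_\Omega \nabla v\cdot \nabla w + \int_\Omega |\nabla w|^2 = \int_\Omega |\nabla v|^2 - \int_\Omega |\nabla w|^2,
\end{equation*}
which is bounded above by $||\phi||_{\Hoo(\GB)}^2$, establishing $||D_0||\le 1$.

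I do not anticipate any serious obstacle; the only point that needs a brief justification is that the boundary-value problems defining $v$, $w$, and $q$ are well-posed, which follows from the discussion in the Appendix and the fact that $\phi\in\Hoo(\GB)$ extends by zero to a function in $H^{1/2}(\pO)$. The argument is in fact parallel to (indeed, essentially dual to) the Neumann bound, which is conceptually reassuring: the roles of $v$ and $w$ are interchanged, and the role played by the identity $v=w$ on $\GB$ in the Neumann proof is played here by the identity $\partial_n w=0$ on $\GS$ in combination with $v=0$ on $\GS$.
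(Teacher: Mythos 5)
Your proof is correct and is essentially the paper's own argument with the three harmonic functions relabeled (your $v$, $w$, $q$ are the paper's $v$, $u$, $w$): the same identification $q=w-v$, the same integration-by-parts identity $\int_\Omega\nabla v\cdot\nabla w=\int_\Omega|\nabla w|^2$, and the same expansion of the square yielding $\|D_0\phi\|^2=\|\phi\|^2-\int_\Omega|\nabla w|^2$. No changes needed.
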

\begin{proof}
Let $\eta\in \Hoo(B)$ and
let $u$, $v$, and $w$ satisfy the following boundary-value problems:
\bes
\begin{aligned}
-\Lap u &= 0 &\quad -\Lap v &= 0 &\quad -\Lap w &= 0 &&\text{in $\Omega$}\\
\partial_n u &= 0 &\quad v &= 0 &\quad w &= u  &&\text{on $\GS$} \\
u & = \eta &\qquad v & = \eta &\qquad w &= 0  &&\text{on $\GB$}.
\end{aligned}
\ee
Then $D_0 \eta = u|_{\GS}$ and
\bes
\begin{aligned}
    ||\eta||_{\Hoo(\GB)} &= \int_{\Omega}\abs{\nabla v}^2\\
    ||D_0 \eta||_{\Hoo(\GS)} &= \int_{\Omega}\abs{\nabla w}^2.
\end{aligned}
\ee
Notice that $u-v$ is harmonic, equals $0$ on $\GB$, and equals $u$ on $\GS$.  Hence $u-v=w$.  Moreover,
\bes
\int_\Omega \nabla u\cdot \nabla v = \int_{\partial \Omega}  (\partial_n u)\; v =    
\int_{B}  (\partial_n u) \; \eta = \int_{\partial \Omega}  (\partial_n u)\; u = \int_{\Omega} \abs{\nabla u} ^2
\ee
since $\partial_n u =0$ on $\GS$ and $v=u=\eta$ on $\GB$.  Hence
\ifjournal
\bes
\begin{aligned}
||D_0 \eta||_{\Hoo(\GS)}^2 &= \int_{\Omega} \nabla(u-v)\cdot\nabla (u-v)\\
& = \int_{\Omega} \abs{\nabla v}^2 - \int_{\Omega} \abs{\nabla u}^2
= ||\eta||_{\Hoo(\GS)}^2 - \int_{\Omega} \abs{\nabla u}^2
\end{aligned}
\ee
\else
\bes
||D_0 \eta||_{\Hoo(\GS)}^2 = \int_{\Omega} \nabla(u-v)\cdot\nabla (u-v) = \int_{\Omega} \abs{\nabla v}^2 - \int_{\Omega} \abs{\nabla u}^2
= ||\eta||_{\Hoo(\GS)}^2 - \int_{\Omega} \abs{\nabla u}^2
\ee
\fi
So $||D_0 \eta||_{\Hoo(\GS)}\le ||\eta||_{\Hoo(\GB)}$ for all $\eta\in \Hoo(\GB)$ and consequently $||D_0||\le 1$.
\end{proof}

Standard results for Landweber iteration (interpreted in the language of Kozlov-Maz'ya iteration) imply the following analogue of Proposition \ref{prop:landweberconvN}.
\begin{proposition} \label{prop:landweberconvD}
Let $u$ be a solution of the Cauchy problem \eqref{cauchy} with Dirichlet data $\sigma$
and Neumann data $\tau$. Suppose $(\sigma^\delta)$ are approximations of $\sigma$ such that
$||\sigma-\sigma^\delta||_{\Hoo(\GS)}<\delta$.
Let $\phi_0\in \hat\phi+\Hoo(\GB)$ (i.e. let $\phi_0$ admit an extension in $H^{1/2}(\Omega)$ that equals $\sigma$ on $S$) and let $\phi_n^\delta$ be the first Kozlov-Maz'ya iterate for the data $(\sigma_\delta,\tau)$ starting from $\phi_0$ such that
\bes
||\sigma^\delta-\phi_n||_{\Hoo(\GS)} < \lambda \delta
\ee 
where $\lambda>1$ is a fixed constant.  Then
\bes
\lim_{\delta\ra 0} ||u|_{\GB}-\phi_n^\delta||_{\Hoo(\GB)} = 0.
\ee

Moreover, the rate of convergence is order optimal.  That is, 
if $u|_B-\hat\phi = (D_0^* D_0)^\mu(\xi)$ for some $\mu>0$ and some $\xi\in \Hoo(B)$, then
\bes
||u|_B - \phi_n^\delta||_{H^{1/2}_{00}(\GB)} \le c \delta^{2\mu/(2\mu+1)} E^{1/(2\mu+1)}
\ee
where $E=||\xi||_{\Hoo(\GB)}$ and where $c$ is constant independent of the sequence $(\sigma^\delta)$.
\end{proposition}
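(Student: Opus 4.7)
The plan is to reduce the statement to a textbook convergence theorem for Landweber iteration under the discrepancy principle (e.g. \cite{EnglHankeNeubauer}) via the identification established in Proposition \ref{prop:kmlandwebersameD}. The conceptual work has already been done; what remains is to phrase the Cauchy problem as a linear operator equation in Hilbert spaces to which standard theory applies.

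First I would set up the correspondence between the Kozlov-Maz'ya iterates and the Landweber iterates. Let $u$ be the exact solution of \eqref{cauchy}, so that $u|_{\GB} \in H^{1/2}(\GB)$ and, by the lemma preceding Proposition \ref{prop:kmlandwebersameD}, $\eta^{*} := u|_{\GB} - \hat\phi \in \Hoo(\GB)$ with $D_0(\eta^*) = \sigma - D(\hat\phi)$. Writing $\phi_0 = \hat\phi + \eta_0$ and $\phi_n^\delta = \hat\phi + \eta_n^\delta$ with $\eta_0, \eta_n^\delta \in \Hoo(\GB)$, Proposition \ref{prop:kmlandwebersameD} shows that $(\eta_n^\delta)$ is exactly the Landweber sequence produced by $L_D$, applied to the perturbed operator equation $D_0(\eta) = \sigma^\delta - D(\hat\phi)$ in the pair of Hilbert spaces $\Hoo(\GB) \to \Hoo(\GS)$. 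The perturbation on the right-hand side has norm $\|(\sigma - D(\hat\phi)) - (\sigma^\delta - D(\hat\phi))\|_{\Hoo(\GS)} = \|\sigma - \sigma^\delta\|_{\Hoo(\GS)} < \delta$, and the discrepancy criterion as stated is exactly the criterion $\|(\sigma^\delta - D(\hat\phi)) - D_0(\eta_n^\delta)\|_{\Hoo(\GS)} < \lambda \delta$ for the transformed equation. The preceding lemma $\|D_0\| \le 1$ certifies that the relaxation constant $a=1$ in $L_D$ is admissible.

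With these identifications in place, the standard convergence theorem for Landweber iteration with discrepancy stopping gives $\eta_n^\delta \to \eta^*$ in $\Hoo(\GB)$ as $\delta \to 0$, which translates to $\|u|_{\GB} - \phi_n^\delta\|_{\Hoo(\GB)} = \|\eta^* - \eta_n^\delta\|_{\Hoo(\GB)} \to 0$. For the order-optimal rate, the source condition $u|_{\GB} - \hat\phi = (D_0^* D_0)^\mu(\xi)$ is the standard Hölder-type source condition for $\eta^*$, so the textbook rate estimate $\|\eta^* - \eta_n^\delta\|_{\Hoo(\GB)} \le c\, \delta^{2\mu/(2\mu+1)} E^{1/(2\mu+1)}$ with $E = \|\xi\|_{\Hoo(\GB)}$ transfers verbatim.

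The main obstacle is not analytic but bookkeeping: one must keep track of where each object lives. The quantity $u|_{\GB} - \phi_n^\delta$ only makes sense in $\Hoo(\GB)$ because both summands share the same $H^{1/2}(\partial\Omega)$ extension $\hat\sigma$ on $\GS$, and the preceding lemma on $\Hoo$-membership is precisely what allows one to subtract off $\hat\phi$ and land in $\Hoo(\GB)$ so that the textbook theory, which requires a true Hilbert-space operator equation with an unperturbed operator, may be invoked. Once that is verified, the convergence and rate statements are direct quotations of standard Landweber results.
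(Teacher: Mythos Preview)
Your proposal is correct and matches the paper's approach exactly: the paper does not give a separate proof of this proposition but simply states that ``Standard results for Landweber iteration (interpreted in the language of Kozlov-Maz'ya iteration) imply the following analogue of Proposition \ref{prop:landweberconvN},'' relying on Proposition \ref{prop:kmlandwebersameD}, the preceding $\Hoo$-membership lemma, and the bound $\|D_0\|\le 1$. Your write-up makes explicit precisely the bookkeeping that the paper leaves implicit.
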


The previous result assumes that $\tau$ is known exactly.  This actually holds in many application of interest where $\tau=0$ represents a stress-free or perfectly insulating boundary condition.  In particular, it holds in the motivating problem from in \cite{MaxwellTrufferAvdonin}.  If $\tau$ is only known approximately, then error in $\tau$ can be rewritten as expanded error in $\sigma$ in a procedure analogous to the one described in Lemma \ref{lem:rewritesigmaerror}.  

A more serious weakness of Proposition \ref{prop:landweberconvD} is that it assumes $||\sigma-\sigma_\delta||_{\Hoo(S)}\ra 0$ which morally implies that the values of $\sigma$ at the interface of $S$ and $B$ are known exactly.
We would prefer to have a theorem treating the case $||\sigma-\sigma_\delta||_{H^{1/2}(S)}\ra 0$.  Nevertheless, Proposition \ref{prop:landweberconvN} has some application in this case as well.  

Suppose $\sigma^\delta\ra\sigma$ in $H^{1/2}(S)$, and let $u^\delta$ and $v^\delta$ be solutions of the following 
boundary-values problems
\bes
\begin{aligned}
-\Lap u^\delta &= 0 &\quad-\Lap v^\delta &= 0 &&\quad\text{in $\Omega$}\\
u^\delta &= \sigma^\delta  &\quad \partial_n v^\delta &= \partial_n u^\delta &&\quad \text{on $\GS$}\\
\partial_n u^\delta &= 0  & v^\delta &= 0 &&\quad \text{on $\GB$}
\end{aligned}
\ee
and let $u^0$ and $v^0$ be the solution of these problems with $\sigma^\delta$
replaced with its true value $\sigma$.  
Then $u=u^\delta-v^\delta+w$ solves the Cauchy problem
\bes
\begin{aligned}
-\Lap u &= f &&\quad\text{in $\Omega$}\\
\partial_n u &= \tau&&\quad \text{on $\GS$}\\
u &= \sigma^\delta&&\quad \text{on $\GS$}.
\end{aligned}
\ee
if and only if $w$ solves the Cauchy problem
\bel{cauchy-reduced}
\begin{aligned}
-\Lap w &= f &&\quad\text{in $\Omega$}\\
\partial_n w &= \tau&&\quad \text{on $\GS$}\\
w &= v^\delta&&\quad \text{on $\GS$}.
\end{aligned}
\ee
Noting that $v^\delta\in\Hoo(S)$, and that $v^\delta|_S\ra v^0|_S$ in $H^{1/2}(S)$, Proposition \ref{prop:landweberconvN} can 
be applied to the Cauchy problems \eqref{cauchy-reduced}.  The stopping criterion then involves the operator norm of the map taking $\sigma^\delta\in H^{1/2}(S)$ to $v^\delta|_S\in \Hoo(S)$.

\section{Conjugate gradient alternative}\label{sec:cg}

Since the Kozlov-Maz'ya alternating method is simply a form of 
the Landweber method, it becomes clear how it might be effectively accelerated. One standard, attractive choice is to use the conjugate gradient method. This strategy, together
with the Morozov discrepancy principle, provides a fast, order-optimal 
regularization scheme (see, eg., \cite{HankeCG}).

For definiteness we treat the Dirichlet case and consider the normal equation
\bes
D_0^*D_0(\eta) = D_0^*(\sigma-D(\hat\phi)).
\ee
The conjugate gradient algorithm for this problem then reads

\quad\LinesNumbered
\begin{algorithm}[H]
$r_0 = \sigma-D(\hat\phi+\eta_0)$\;
$q_0=D_0^* r_0$\;
$d = q_0$\;
$k = 0$\;
\While{true}
{
  $s=D_0 d$\;
  $\alpha = || q_k ||^2_{\Hoo(\GB)} /||s||_{\Hoo(\GS)}$\;
  $\eta_{k+1} = \eta_k+\alpha d$\;
  $r_{k+1} = r_k - \alpha s$\;
  $q_{k+1} = D_0^*r_{k+1}$\;
  $\beta = || q_{k+1} ||^2_{\Hoo(\GB)} / || q_k ||^2_{\Hoo(\GB)}$\;
  $d = q_{k+1}+\beta d$\;
  $k=k+1$\;
}
\caption{Conjugate gradient version of Dirichlet Landweber approach}
\label{alg1}
\end{algorithm}

Using the discrepancy principle, the main loop is terminated when $||r_k||_{\Hoo(S)}$ is sufficiently small, and the regularized solution of
the Cauchy problem is then $\calD(\hat\phi+\eta_k)$. 

The computation of $\alpha$ and $\beta$
requires computation of several norms in $\Hoo(\GB)$ and $\Hoo(\GS)$, each of which would appear to require the solution of a boundary-value problem. We circumvent this difficulty by representing each element $\gamma$ of $H^{1/2}_{00}(\GB)$ (i.e. $q_k$ and $d$ in the algorithm)
by a harmonic function that is equal to 
$\gamma$ on $\GB$ and  equal to $0$ on $\GS$; the norm
is then easy to compute according to the definition in Section \ref{sec:H12}.
A similar principle applies
to the variables $r_k$ and $s$ in $\Hoo(\GS)$ which are represented by harmonic functions that equal zero on $\GB$.  For this convention to be effective, we need to be able to compute the action of $D_0$ and $D_0^*$, which the following lemmas show is remarkably easy.
\begin{lemma}
Suppose $w$ is harmonic and equals zero on $S$.  Then
$$
z=\calD_0(w|_B)-w
$$
is harmonic, equals $D_0(w|_B)$ on $S$ and equals zero on $B$.
\end{lemma}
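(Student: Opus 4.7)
The proof should be a direct verification from the definitions. My plan is to simply spell out the three required properties of $z = \calD_0(w|_B) - w$ by unpacking what $\calD_0$ does and using the hypotheses on $w$.

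First, I would recall the definition of $\calD_0$: given $\phi \in H^{1/2}(B)$, the function $\calD_0(\phi)$ is the unique solution of $-\Lap \calD_0(\phi)=0$ in $\Omega$, $\partial_n \calD_0(\phi)=0$ on $S$, and $\calD_0(\phi)=\phi$ on $B$, and then $D_0(\phi)=\calD_0(\phi)|_S$. Applied with $\phi=w|_B$, we get that $\calD_0(w|_B)$ is harmonic, has vanishing normal derivative on $S$, and equals $w|_B$ on $B$.

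Given this, harmonicity of $z$ is immediate: both $\calD_0(w|_B)$ and $w$ are harmonic, so their difference is harmonic. Next, restricting to $B$, we have $z|_B = \calD_0(w|_B)|_B - w|_B = w|_B - w|_B = 0$, which gives the second claim. Finally, restricting to $S$, we use the hypothesis that $w$ vanishes on $S$ to conclude $z|_S = \calD_0(w|_B)|_S - 0 = \calD_0(w|_B)|_S = D_0(w|_B)$, which is the third claim.

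Essentially there is no obstacle here; the statement is a bookkeeping lemma whose content is that $D_0$ can be applied by solving a Neumann problem on a function that is already nearly a solution, and then peeling off the original. The only thing to be slightly careful about is that $w|_B$ is indeed a valid input for $\calD_0$, but since $w \in H^1(\Omega)$ is harmonic, its trace lies in $H^{1/2}(\partial\Omega)$ and hence its restriction to $B$ lies in $H^{1/2}(B)$, so the Dirichlet problem defining $\calD_0(w|_B)$ is well-posed by the standard theory already invoked in the paper.
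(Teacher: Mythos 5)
Your proof is correct and is essentially identical to the paper's: both verify harmonicity as a difference of harmonic functions, use $\calD_0(w|_B)|_B=w|_B$ to get $z|_B=0$, and use $w|_S=0$ together with $D_0(\phi)=\calD_0(\phi)|_S$ to get $z|_S=D_0(w|_B)$. The extra remark on well-posedness of the mixed problem for $\calD_0(w|_B)$ is fine but not needed beyond what the paper already establishes.
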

\begin{proof}
Since $z$ is a difference of harmonic functions it is harmonic.
Since $w|_S=0$ we have
\bes
z|_S = \calD_0(w|_B)|_S = D_0(w|_B)
\ee
by definition of $D_0$ in terms of $\calD_0$.  On the other hand, $\calD_0(w|_B)|_B=w|_B$, by the definition of $\calD_0$.
So $z|_B=0$.
\end{proof}
\begin{lemma}
Suppose $w$ is harmonic and equals zero on $B$.  Then
\bel{harmonicN0}
z=-\calN_0(\partial_n w|_S)
\ee
is harmonic, equals $D_0^*(w|_S)$ on $B$ and equals zero on $S$.
\end{lemma}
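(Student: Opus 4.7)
The plan is to verify the three claimed properties of $z$ in turn, with the substantive content being the identification $z|_{\GB}=D_0^*(w|_{\GS})$. Harmonicity of $z$ and the boundary condition $z|_{\GS}=0$ are immediate from the definition of $\calN_0$, which (with $f=0$ and $\sigma=0$) produces a harmonic function with zero Dirichlet trace on $\GS$; both properties transfer directly to $z=-\calN_0(\cdot)$.

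For the identification on $\GB$ I would appeal to Lemma \ref{lem:Dstar} with $\gamma=w|_{\GS}$. That lemma characterizes $D_0^*(w|_{\GS})$ as $r|_{\GB}$, where $q$ is the harmonic function with $q|_{\GS}=w|_{\GS}$, $q|_{\GB}=0$, and $r$ is the harmonic function with $r|_{\GS}=0$, $\partial_n r|_{\GB}=-\partial_n q|_{\GB}$. The key observation is that $w$ itself already satisfies the characterization of $q$ (it is harmonic, it vanishes on $\GB$, and it agrees with $w|_{\GS}$ on $\GS$), so by uniqueness of the mixed boundary-value problem $q=w$, and therefore $\partial_n q|_{\GB}=\partial_n w|_{\GB}$.

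It remains to match $z$ with the $r$ so characterized. Unwinding the definition of $\calN_0$ (which takes Neumann data on $\GB$ and returns a harmonic function vanishing on $\GS$ with that Neumann trace on $\GB$), the function $-\calN_0(\partial_n w|_{\GB})$ is the unique harmonic function with zero Dirichlet trace on $\GS$ and Neumann trace $-\partial_n w|_{\GB}$ on $\GB$, exactly matching the characterization of $r$. Uniqueness then yields $z=r$, hence $z|_{\GB}=D_0^*(w|_{\GS})$ as claimed.

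No significant obstacle is anticipated; the argument is bookkeeping on boundary traces together with invocations of uniqueness. The one place demanding care is the sign convention and the assignment of traces on $\GS$ versus $\GB$ when comparing with Lemma \ref{lem:Dstar}, which is why a reduction to that lemma is more transparent than repeating an integration-by-parts computation from scratch along the lines of the proof of Lemma \ref{lem:Dstar} itself.
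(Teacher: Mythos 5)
Your argument is correct and is essentially the paper's own proof: the paper likewise identifies $w$ with $q$ and $z$ with $r$ in Lemma \ref{lem:Dstar}, with harmonicity and $z|_{\GS}=0$ following directly from the definition of $\calN_0$. You have also (rightly) read the argument of $\calN_0$ as $\partial_n w|_{\GB}$; the $|_{\GS}$ in the statement is evidently a typo, since $\calN_0$ takes Neumann data on $\GB$.
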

\begin{proof}
That $z$ is harmonic and equals zero on $S$ follows immediately
from the definition of $\calN_0$.  On the other hand,
inspecting Lemma \ref{lem:Dstar} with $w$ playing the role of $q$ and
$z$ playing the role of $r$ in equations \eqref{Dstarp1}
we see that $z|_B = D_0^*(w|_S)$.
\end{proof}
It is worth remarking that $z$ in equation \eqref{harmonicN0}
satisfies the weak formulation that $z|_S=0$ and
$$
\int_\Omega \nabla z \cdot \nabla \chi = \int_\Omega \nabla w \cdot \nabla \chi
$$
for all test functions $\chi$ that equal zero on $S$. Hence the exterior derivative $\partial_n w$ need not be explicitly found 
when solving for $z$.

Starting with an initial value $\hat\phi+\eta_0\in\hat\phi+\Hoo(B)$, 
let $u_0$ be the solution of
\bes
\begin{aligned}
-\Laplacian u_0 &= f &&\text{in $\Omega$}\\
u_0 &= \sigma &&\text{on $\GS$}\\
u_0 &= \hat\phi+\eta_0&&\text{on $\GB$}.\\
\end{aligned}
\ee
Then Algorithm \ref{alg1}, can be rewritten as follows.

\quad
\begin{algorithm}[H]
$r_0 = u_0-\calD(u_0|_B)$\;
$q_0=-\calN_0(\partial_n r_0|_B)$ \;
$d = q_0$\;
$k = 0$\;
\While{true}
{
  $s= \calD_0(d|_B) - d$\;
  $\alpha = \int_\Omega \abs{\nabla q_k}^2 \Big / \int_\Omega\abs{\nabla s}^2$\;
  $u_{k+1} = u_k+\alpha d$\;
  $r_{k+1} = r_k - \alpha s$\;
  $q_{k+1} = -\calN_0(\partial_n r_{k+1}|_S)$\;
  $\beta = \int_\Omega \abs{\nabla q_{k+1}}^2 \Big / \int_\Omega \abs{\nabla q_{k}}^2$\;
  $d = q_{k+1}+\beta d$\;
  $k=k+1$\;
}
\caption{Simplified Dirichlet conjugate gradient approach}
\label{alg2}
\end{algorithm}

When the main loop is terminated (e.g. using the discrepancy principle), 
the regularized solution of the Cauchy problem is then $\calD(u_k|_B)$. 
Each iteration of the loop requires solving exactly two boundary-value problems (one for $\calD_0$ and one for $\calN_0$), 
just as for Kozlov-Maz'ya iteration.  In Section \ref{sec:numerics} we demonstrate how the number of iterations needed for the conjugate gradient algorithm can be substantially less than standard Kozlov-Maz'ya iteration.

\section{Variations of the conjugate gradient approach}
We have worked with solving the equation
\bes
D(\phi) = \sigma
\ee
where $D:H^{1/2}(\GB)\ra H^{1/2}(\GS)$.
By changing the source or target spaces to be $L^2$
spaces, one obtains three alternative possibilities for the conjugate gradient algorithm.  

\begin{itemize}
\item{\bf[$L^2(\GB)\ra L^2(\GS)$]}\par
This variation was treated in \cite{HaoLesnic2000}.  Because the choice of domain has lower regularity than $H^{1/2}$, one
expects the reconstructed solutions to exhibit lower regularity than Kozlov-Maz'ya iteration.  Indeed, one step of the algorithm
involves a boundary condition of the form
\bel{reg-loss}
u|_{\GB} = \partial_n v|_{\GB},
\ee
where $v$ is a previously computed harmonic function.  This step has the effect of lowering the regularity of $u$ and is perhaps
responsible for oscillations observed in \cite{HaoLesnic2000} Figures 2, 4, and 6.  In particular, we performed a reconstruction of \cite{HaoLesnic2000} Figure 2 using the $H^{1/2}\rightarrow H^{1/2}$ method (as well as the $H^{1/2}\rightarrow L^2$ method described below) and these oscillations are absent.
	
\item{\bf[$L^2(\GB)\ra H^{1/2}(\GS)$]}
This approach appears in \cite{Knowles2004}, although it is not presented as such.
That paper considers the functional
\bes
G(\phi) = \int_\Omega \abs{\nabla(v-w)}^2
\ee
where $v$ is the solution of 
\bes
\begin{aligned}
-\Laplacian	v &= f &&\text{in $\Omega$}\\
v &= \sigma &&\text{on $\GS$}\\
v &= \phi &&\text{on $\GB$}.
\end{aligned}
\ee
and $w$ is the solution of
\bes
\begin{aligned}
-\Laplacian	w &= f &&\text{in $\Omega$}\\
\partial_n w &= \tau &&\text{on $\GS$}\\
w &= \phi &&\text{on $\GB$}.
\end{aligned}
\ee
Noting that $(v-w)$ is harmonic and equal to zero on $\GB$, we see that $G$ can be rewritten as
\bes
G(\phi) = || (v-w)|_{\GS} ||_{\Hoo(\GS)}^2 = || \sigma-w|_{\GS} ||_{\Hoo(\GS)}^2,
\ee
which is the functional being minimized by the Dirichlet Landweber procedure
presented in Section \ref{sec:landweberD}.  
However, \cite{Knowles2004} initially uses the $L^2(\GB)$ gradient of $G$.  Since $G$ is not defined on all of $L^2$
(we do not expect solutions with $L^2$ boundary data to lie in $H^1(\Omega)$), the $L^2$ gradient leads
to a loss of regularity, and the algorithm has a step similar to equation \eqref{reg-loss}.
This trouble is ameliorated in \cite{Knowles2004}  by introducing a smoothing step, effectively recasting the 
domain as a subspace of $H^1(\GB)$ and factoring the map through $L^2(\GB)$.  
Since the norm used for $H^1(B)$ involves a PDE defined only on the domain boundary,
the additional step adds some complication to the algorithm when compared to Kozlov-Maz'ya iteration.

\item{\bf[$H^{1/2}(\GB)\ra L^2(\GS)$]}\par
This combination does not appear to have been previously addressed in the literature, and has some potential interest.  Since the domain is $H^{1/2}(\GB)$, we
expect a reconstruction with higher regularity than the method of \cite{HaoLesnic2000}.  And since the range is $L^2(\GS)$, 
the associated stopping principle will involve $L^2$ error estimates, which are much easier to obtain than the rather abstract $H^{1/2}$ error estimates.

As in Section \ref{sec:landweberD}, we consider the operator equation
\bel{operatorDagain}
(\iota\circ D)(\eta) = \sigma
\ee
where $D:H^{1/2}(B)\ra H^{1/2}(S)$ and where $\iota:H^{1/2}(S)\ra L^2(S)$
is the natural embedding. Note that we work with $H^{1/2}(B)$ rather
than the more awkward space $\Hoo(B)$.
Equation \eqref{operatorDagain} can be rewritten
\bes
(\iota\circ D_0)(\eta) = \sigma-\iota(D_0)(0)),
\ee
and to apply the Landweber or conjugate gradient methods we need to
be able to compute $(\iota\circ D_0)^*$.
\begin{proposition}
Let $\gamma\in L^2(S)$, and let $w$ be the solution of 
\bel{iDstar}
\begin{aligned}
-\Lap w &= 0 &&\text{in $\Omega$}\\
\partial_n w &= \gamma &&\text{on $S$}\\
\partial_n w &= -\frac{1}{|B|}\int_S \gamma &&\text{on $B$}\\
\frac{1}{|B|}\int_B w &=  \int_S\gamma.
\end{aligned}
\ee	
Then $(\iota\circ D_0)^*(\gamma)=w|_B$.
\end{proposition}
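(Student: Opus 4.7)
The proposition asserts that the adjoint of $\iota\circ D_0:H^{1/2}(B)\to L^2(S)$ sends $\gamma$ to $w|_B$, where $w$ solves the mixed Neumann problem \eqref{iDstar} with a specific normalization of its mean on $B$. My plan is to verify the defining identity
\bes
\ip<(\iota\circ D_0)(\eta),\gamma>_{L^2(S)} = \ip<\eta, w|_B>_{H^{1/2}(B)}
\ee
for arbitrary $\eta\in H^{1/2}(B)$, exploiting Green's identities and the carefully chosen Neumann data on $B$ to absorb the mean-value term built into the $H^{1/2}(B)$ inner product.

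First I would pause to observe that \eqref{iDstar} is well-posed: the Neumann data integrates to zero over $\partial\Omega$ by the very design of the constant $-\frac{1}{|B|}\int_S\gamma$ on $B$, so a solution exists modulo constants, and the mean condition $\frac{1}{|B|}\int_B w=\int_S\gamma$ pins down that constant. Next, I expand both sides of the target identity in terms of harmonic extensions. Let $u=\calD_0(\eta)$, so $u$ is harmonic with $u|_B=\eta$ and $\partial_n u|_S=0$, and $D_0(\eta)=u|_S$. Since $u$ is precisely the $r$-type extension associated with $\eta$ for the $H^{1/2}(B)$ norm (via the obvious swap of $S$ and $B$ in Section \ref{sec:H12}), we have
\bes
\ip<\eta,w|_B>_{H^{1/2}(B)} = \int_\Omega \nabla u\cdot\nabla R + \frac{1}{|B|^2}\left(\int_B u\right)\left(\int_B R\right),
\ee
where $R$ is harmonic with $R|_B=w|_B$ and $\partial_n R|_S=0$.

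Then I would compute the left-hand side by Green's identities, using $\partial_n w|_S=\gamma$:
\bes
\int_S u\,\gamma = \int_{\partial\Omega} u\,\partial_n w - \int_B u\,\partial_n w = \int_\Omega \nabla u\cdot\nabla w + \frac{1}{|B|}\left(\int_B u\right)\left(\int_S\gamma\right),
\ee
the last step using that $\partial_n w$ equals the constant $-\frac{1}{|B|}\int_S\gamma$ on $B$. To match this against the $H^{1/2}(B)$ inner product, set $h=w-R$. Then $h$ is harmonic with $h|_B=0$, and since $\partial_n u|_S=0$, the identity $\int_\Omega\nabla u\cdot\nabla h = \int_{\partial\Omega}h\,\partial_n u$ vanishes term by term. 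Hence $\int_\Omega \nabla u\cdot\nabla w = \int_\Omega \nabla u\cdot\nabla R$.

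Finally, the scalar term is reconciled by the normalization clause of \eqref{iDstar}: since $\frac{1}{|B|}\int_B w = \int_S\gamma$ and $\int_B R=\int_B w$, we have $\frac{1}{|B|}(\int_B u)(\int_S\gamma)=\frac{1}{|B|^2}(\int_B u)(\int_B R)$, which is exactly the mean-value contribution in the $H^{1/2}(B)$ inner product. Assembling the pieces yields the claim. The main obstacle is conceptual rather than technical: one must recognize that the odd-looking constant Neumann datum on $B$ and the prescribed mean of $w$ on $B$ are not auxiliary choices but are precisely what is required for the boundary integral against $\partial_n w|_B$ and the mean-value term of the $H^{1/2}(B)$ norm to cancel in tandem.
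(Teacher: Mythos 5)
Your proof is correct and follows essentially the same route as the paper's: both verify the adjoint identity by Green's formula, introduce the harmonic extension of $w|_B$ with vanishing Neumann data on $S$ (your $R$, the paper's $q$), and observe that the constant Neumann datum on $B$ together with the prescribed mean of $w$ reproduce exactly the mean-value term of the $H^{1/2}(B)$ inner product. The only cosmetic difference is that you pass from $\int_\Omega \nabla u\cdot\nabla w$ to $\int_\Omega\nabla u\cdot\nabla R$ via the difference $h=w-R$, where the paper does the same by two integrations by parts.
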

\begin{proof}
We first note that there is a solution $w\in H^{1}(\Omega)$ of system
\eqref{iDstar}. Indeed, the Neumann data (which belongs to $L^2(\pO)\subseteq H^{-1/2}(\pO)$) satisfies the compatibility condition $\int_{\pO} \partial_n w = 0$.  Hence the PDE admits a solution in $H^{1}(\Omega)$ determined
uniquely up to a constant.  The final equation then determines the value of the constant.

Now let $\phi\in H^{1/2}(B)$ and  $\gamma\in L^2(S)$ be arbitrary.
Let $w$ be the solution of system \eqref{iDstar} and let $v$ and $q$ solve
\bes
\begin{aligned}
-\Laplacian	v &= 0  &\quad -\Lap q &= 0     &&\text{in $\Omega$}\\
\partial_n v &=  &\quad \partial_n q &= 0 &&\text{on $\GS$}\\
 v &= \phi &\quad q &= w|_B &&\text{on $\GB$}.\\
\end{aligned}
\ee
Then $D_0(\phi)=v|_S$ and
\bes
\begin{aligned}
\ip<(\iota\circ D_0)(\phi),\gamma>_{L^2(S)} &=\int_S v \gamma \\
&=  \int_{\pO} v\, \partial_n w +\int_B\left[ v\; \frac{1}{|B|}\int_S \gamma\right]\\
&=  \int_{\Omega} \nabla v \cdot \nabla w +\left[\frac{1}{|B|}\int_{B} v\right]  \left[\int_S \gamma\right].
\end{aligned}
\ee
Now 
\bes
\int_S\gamma = \frac{1}{|B|}\int_B w = \frac{1}{|B|}\int_B q.
\ee
Moreover,
\bes
\int_\Omega \nabla u\cdot\nabla w = \int_{\partial\Omega}\partial_n u\; w
= \int_{\partial\Omega} \partial_n u\; q = \int_{\Omega}\nabla u\cdot\nabla w
\ee
since $\partial_n u=0$ on $S$ and $q=w$ on $B$.  Combining 
all these equations we conclude
\bes
\begin{aligned}
\ip<(\iota\circ D_0)(\phi),\gamma>_{L^2(S)} &= \int_\Omega \nabla v\cdot \nabla q + 
\left[\frac{1}{|B|}\int_{B} v\right] \left[\frac{1}{|B|}\int_{B} q\right]\\ 
&= \ip<v|_B,q|_B>_{H^{1/2}(B)}\\
&= \ip<\phi,q|_B>_{H^{1/2}(B)}.
\end{aligned}
\ee
Hence $(\iota\circ D_0)^*(\gamma) = q|_B$.
\end{proof}

The conjugate gradient algorithm for this problem starts with
an initial value $\phi_0\in H^{1/2}(B)$ and a corresponding
$u_0=\calD(\phi_0)$.  Given $\gamma\in L^2(S)$, we define 
$W_0(\gamma)=w|_B$ where $w$ is  the solution of system \eqref{iDstar}.
We then obtain an analogue of Algorithm \ref{alg2} by 
tracking elements of $H^{1/2}(B)$ as harmonic functions with zero Neumann data on $S$.

\begin{algorithm}[H]
$r_0 = \sigma - u_0|_S$\;
$q_0= \calD_0(W_0(r_0))$ \;
$d = q_0$\;
$k = 0$\;
\While{true}
{
  $\alpha = \int_\Omega \abs{\nabla q_k}^2 \Big / \int_S\abs{d|_S}^2$\;
  $u_{k+1} = u_k+\alpha d$\;
  $r_{k+1} = r_k - \alpha d|_S$\;
  $q_{k+1} = \calD_0(W_0(r_k))$\;
  $\beta = \int_\Omega \abs{\nabla q_{k+1}}^2 \Big / \int_\Omega \abs{\nabla q_{k}}^2$\;
  $d = q_{k+1}+\beta d$\;
  $k=k+1$\;
}
\caption{$H^{1/2}(B)\rightarrow L^2(S)$ conjugate gradient approach}
\label{alg3}
\end{algorithm}

Upon exit, the regularized solution of the Cauchy problem is
simply $u_k$.

\end{itemize}

\section{Numerical Results}\label{sec:numerics}

Let $\Omega$ be the domain in the plane bounded above by the $x$-axis for $-1\le x \le 1$ and bounded below by a parabola passing through the points $(-1,0)$, $(1,0)$, and $(0,-d)$ (Figure \ref{fig:domain}, left). We take $S$ to be the region on the $x$-axis with $-1<x<1$ and $B$ to be the portion of the boundary with $y<0$.  The Cauchy problem to solve is
\bes
\begin{aligned}
	-\Lap u &= f_0 &&\text{in $\Omega$}\\
	u &= \sigma && \text{on $S$}\\
	\partial_n u &= 0 && \text{on $S$}
\end{aligned}
\ee
where $f_0$ is a constant. This is a model for a glaciological inverse problem where $\Omega$ is the cross-section of a glacier and $u$ represents the component of ice velocity orthogonal to the cross-section.  The homogeneous Neumann condition arises as a consequence of a zero-stress hypothesis at the ice surface $S$, and surface velocity measurements are represented by $\sigma$.

\begin{figure}
	\centering
\includegraphics{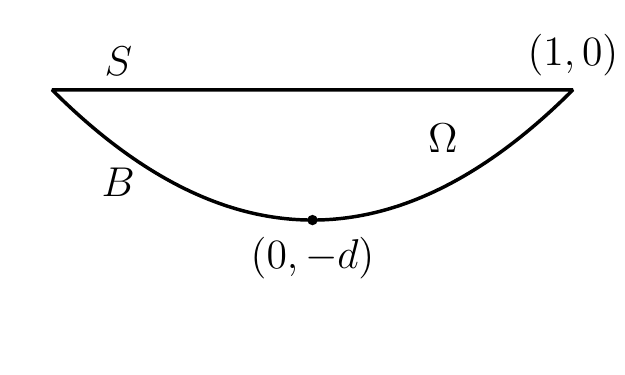}\includegraphics{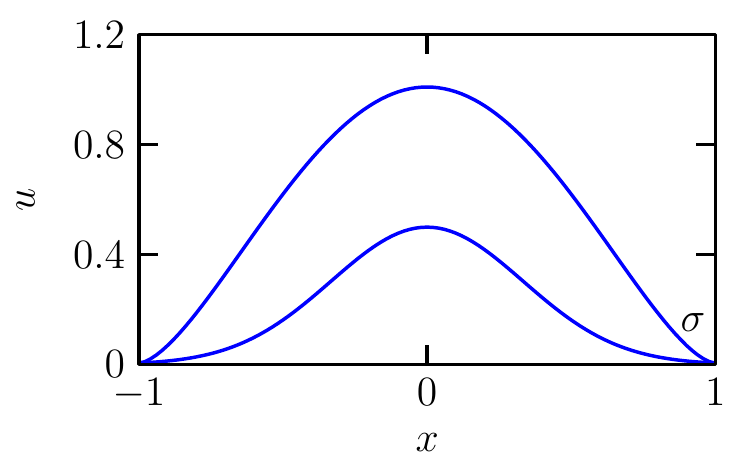}
\caption{Left: computational domain.  Right: True values of $\sigma=u|_S$  and $\phi=u|_B$.}
\label{fig:domain}
\end{figure}
\begin{figure}
	\centering
\includegraphics{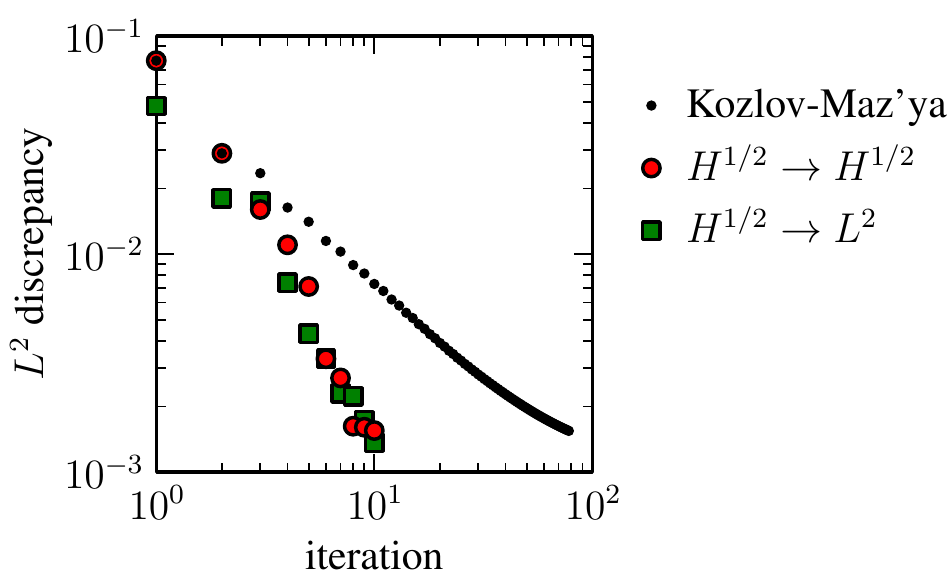}
\caption{History of the $L^2$ discrepancies for the $p=0.1$ reconstruction.
%  Kozlov-Maz'ya iteration,
% the $H^{1/2}\mapsto H^{1/2}$ conjugate gradient algorithm, and the $H^{1/2}\mapsto L^2$ conjugate gradient algorithm.
}
\label{fig:disc_history}
\end{figure}
We consider synthetic data $\sigma$ obtained by numerically solving the problem
\bel{uforward}
\begin{aligned}
	-\Lap u &= f_0 &&\text{in $\Omega$}\\
	\partial_n u &= 0 && \text{on $S$}\\
	u &= \phi && \text{on $B$}
\end{aligned}
\ee
where $\phi$ is a prescribed function; then $\sigma=u|_S$.  We used
a bump-function 
\bes
\phi(x,y)= u_0 \exp\left(-\frac{1}{2}\,{x^2}/{s^2}\right),
\ee
where $u_0$ and $s$ are constants.  Figure \ref{fig:domain} (right)
illustrates values of $u$ on $B$ and $S$.
\begin{figure}
		\centering
\includegraphics{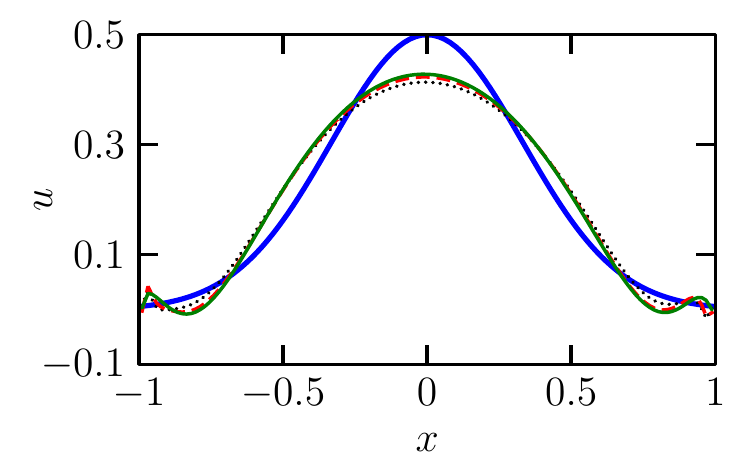}\ifjournal\quad\fi\includegraphics{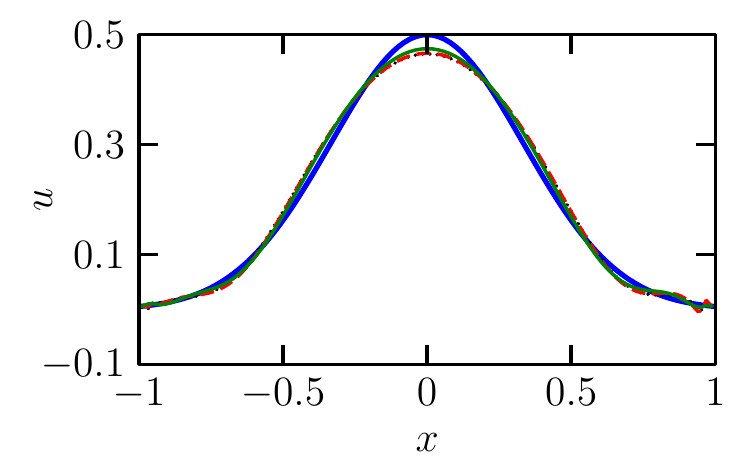}
\caption{Reconstruction of $u$ on $B$.  \ifjournal Above: \else Left: \fi $p=1$. \ifjournal Below: \else Right: \fi $p=0.1$.
Kozlov-Mazya (black, dotted), $H^{1/2}\rightarrow H^{1/2}$ (red, dashed), $H^{1/2}\rightarrow L^2$ (green, solid), true value (blue, solid).}
\label{fig:basal_match}
\end{figure}
\begin{figure}
	\centering
\includegraphics{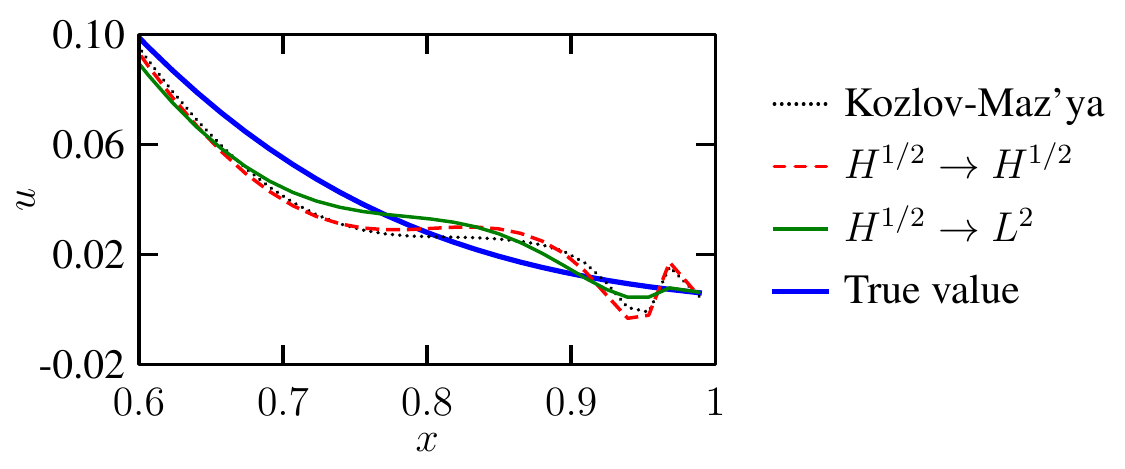}
\caption{Detail of Figure \ref{fig:basal_match}, $p=0.1$, near the interface of $S$ and $B$.}
\label{fig:basal_match_zoom}
\end{figure}

All computations were done using the finite element method (and in particular using the FEniCS \cite{FEniCS} framework).
In the following we used depth $d=1/2$, forcing term $f_0=8$, peak basal speed $u_0=1/2$,  and standard deviation $s=1/3$.  The forcing term was selected so that the peak value $u_{\rm max}$ of the solution $u$ of system \eqref{uforward}  was approximately $1$.

The surface measurements $\sigma$ were perturbed by spatially uncorrelated gaussian noise with standard deviation $u_{\rm max} \frac{p}{100}$, where 
$p$ is a constant describing `percent-noise'.
We then applied standard Kozlov-Maz'ya iteration, the $H^{1/2}\rightarrow H^{1/2}$ conjugate gradient method (Algorithm \ref{alg2}), and the $H^{1/2}\rightarrow L^2$
conjugate gradient method (Algorithm \ref{alg3}) to solve the inverse problem.
The starting estimate was $\phi_0=0$ for all reconstructions.  

Estimating error in the $H^{1/2}(S)$ norm for use in the discrepancy principle 
is impractical, whereas we have good estimates for the error in the $L^2(S)$ norm.  For all three algorithms, we therefore terminated early based on the $L^2$ discrepancy.  This is formally justified only for the $H^{1/2}\rightarrow L^2$ algorithm. Nevertheless, the other two algorithms using this modified discrepancy principle appeared to remain stable.  It would be interesting to have a formal proof of this observation.

Figure \ref{fig:disc_history} shows a graph of the $L^2$ discrepancies
\bes
||u-u_k||_{L^2(S)}
\ee
for the $p=0.1$ reconstruction for each of the three algorithms.
As would be expected, the conjugate gradient based algorithms required 
substantially fewer iterations to reach their target discrepancies.
This improvement was less substantial for the larger (and more physically relevant) value of $p=1$  where the conjugate gradient algorithms each used six iterations and Kozlov-Maz'ya iteration used nine iterations.

All three algorithms gave qualitatively similar reconstructions for $p=1$ and $p=0.1$ (Figure \ref{fig:basal_match}).  The algorithms using $H^{1/2}(S)$ for the surface norm tended to have stronger oscillations near the interface of $S$ and $B$.  Figure \ref{fig:basal_match_zoom} shows a detail of these oscillations near the boundary point $(1,0)$ in the $p=0.1$ case.  Although present in all three reconstructions, the oscillations were more damped by the  $H^{1/2}\rightarrow L^2$ algorithm.

Of the three algorithms used, the $H^{1/2}\rightarrow L^2$ algorithm had a number of slight advantages.  In addition to having the speed of the conjugate gradient algorithm, it has a provably rigorous stopping principle for an easily obtained error estimate, and it generated subtly better reconstructions near the interface of $S$ and $B$.

\section*{Appendix}
We recall here standard facts about Sobolov spaces.  Unless otherwise noted,
we use the definitions and notation of the careful exposition in \cite{McleanElliptic}.

Let $\Omega\subseteq\Reals^n$ be a connected, relatively compact open set with a Lipschitz boundary  $\partial \Omega$. Suppose
$S$ and $B$ are open subsets of $\partial \Omega$ sharing a common
boundary $\Pi$ and that $\pO=S\cup B\cup \Pi$ is a Lipschitz dissection as defined in \cite{McleanElliptic}.

For $s\in[-1,1]$, we define $H^{s}(\partial \Omega)$ as in \cite{McleanElliptic}.
In particular, the trace map from $H^1(\Omega)$ to $H^{1/2}(\pO)$ is a continuous surjection that admits a continuous right inverse.

The space $H^{s}(S)$ is then defined as restrictions to $S$
of distributions in $H^{s}(\pO)$, and is given the quotient norm,
so
\bes
||u||_{H^{s}(S)} = \inf\{||U||_{H^s(\pO)}:U|_{S}=u\}.
\ee
We define $\Hoos(S)$ to be the closure in $H^{s}(\pO)$ of the Lipschitz functions  with compact support in $S$ . In particular,
$\Hoos(S)$ is a closed subspace of $H^s(\pO)$, though we will commonly identify such functions as elements of $H^s(S)$.  Because
of the regularity of the common boundary $\Pi$, 
\bes
\Hoos(S) = \{ u\in H^s(S): u|_{B}=0\}.
\ee
Spaces of distributions on $B$ are defined similarly.

The bilinear form on $\Lip(\pO)\times \Lip(\pO)$
given by
\bes
\left<u,v\right>=\int_{\pO} uv\;
\ee
extends continuously to a bilinear form on $H^{-s}(\pO)\times H^{s}(\pO)$, and the map $u\mapsto \left<u,\cdot\right>$ gives an
isomorphism between $H^{-s}(\pO)$ and $H^{s}(\pO)^*$.  Again,
because of the regularity of the common boundary $\Pi$, there is
an isomorphism $\iota$ between $\Hoons(S)$ and $(H^{s}(S))^*$ defined as follows: for $u\in \Hoons(S)$ and $v\in H^{s}(S)$, 
\bes
(\iota u)(v) = \left<u,V\right>
\ee
where $V\in H^s(\pO$ is any function with $V|_{S}=v$. By reflexivity we also have an isomorphsim between $H^s(S)$
and $\Hoons(S)^*$.  Note that \cite{McleanElliptic} denotes 
$\Hoos$ by $\tilde H^s$.

\subsection*{Mixed Boundary Value Problems}
We wish to solve
\bel{poisson-mixed}
\begin{aligned}
-\Laplacian u &= f &&\text{in $\Omega$}\\
u &= \phi &&\text{on $S$}\\
\partial_n u &= \tau &&\text{on $B$}.
\end{aligned}
\ee
Let $H^1(\Omega,S)=\{u\in H^1(\Omega):u|_{S}=0\}$. This
is a closed subspace of $H^1(\Omega)$ (being the kernel of restriction to the boundary followed by restriction to $S$).  For $u,v\in H^1(\Omega,S)$ let
\bes
A(u,v) = \int_\Omega \nabla u\cdot \nabla v.
\ee
This is evidently a continuous bilinear form.  Moreover, it is strongly coercive
so long as $S$ is nonempty (since we have assumed additionally that $S$ is open). The argument is completely analogous to the corresponding 
one for the well-known case where $S=\pO$.

Now suppose $f\in (H^1(\Omega))^*$, $\phi\in H^{1/2}(S)$, and $\tau\in H^{-1/2}(B)$.
A weak solution of equation \eqref{poisson-mixed} is a function $u\in H^1(\Omega)$ such that $u=\phi$ in $S$ and such that
\bel{weak-form}
A(u,v) = \int_\Omega f v + \int_{B} \tau\, v 
\ee
for all $v\in H^1(\Omega,\Gamma_1)$. The integrals on the right-hand side of equation \eqref{weak-form} are to be interpreted as shorthand for the application of linear functionals.  In particular, the integral over $B$ makes
sense for if $v\in H^{1}(\Omega,S)$ then $v|_{S}=0$ and
hence $v|_{\pO}\in H^{1/2}_{00}(B) = H^{-1/2}(B)^*$.

Pick $w\in H^{1}(\Omega)$ such that $w|_{S}=\phi$; indeed since the trace and restriction operators have continuous right inverses, we can pick a $w$ such that its norm in $H^{1}(\Omega)$ is controlled by the norm of $\phi$ in $H^{1/2}(S)$.  Then $u=w+h$ solves equation \eqref{weak-form} if and only if $h\in H^{1}(\Omega,S)$
and 
\bel{weak-form-nonhom}
\int_\Omega \nabla h \cdot \nabla v = \int_\Omega (f v -\nabla w\cdot\nabla v) + \int_{B} \tau\, v 
\ee
for all $v\in H^1(\Omega,\Gamma_1)$. The right-hand side of equation  \eqref{weak-form-nonhom} is a continuous linear functional on $v\in H^1(\Omega,\Gamma_1)$.  So the the Lax-Milgram theorem implies that there is a unique solution $h$ of equation \eqref{weak-form-nonhom} and hence a unique solution 
$u$ of equation \eqref{weak-form}.  Moreover, the Lax-Milgram theorem (along with the aforementioned control on the size of $||w||_{H^1(\Omega)}$) implies there are constants $c_1$ and $c_2$ such that
\bel{mixed-cont}
\begin{aligned}
||u||_{H^1(\Omega)} &\le c_1 (||w||_{H^1(\Omega)} +||h||_{H^1(\Omega,S)})\\
& \le c_2 (||f||_{(H^{1}(\Omega,S))^*} + ||\phi||_{H^{1/2}(S)} + ||\tau||_{H^{-1/2}(B)}).
\end{aligned}
\ee

\subsection*{Boundary Neumann Data}
Suppose $u\in H^1(\Omega)$ and $-\Lap u = f$ where $f\in (H^{1}(\Omega))^*$.
Then we define $\partial_n u|_{\pO}\in H^{-1/2}(\pO)$ by 
\bel{neumann-def}
\int_{\pO} (\partial_n u) \phi = \int_{\Omega} \nabla u\cdot\nabla v - \int_{\Omega} f v
\ee
where $v$ is any element of $H^1(\Omega)$ that equals $\phi$ on $\pO$.
See, e.g., \cite{McleanElliptic} for a proof that this is well defined.
We repeatedly use the following lemma in various guises 
(and with little comment) throughout this paper.
\begin{lemma} Suppose $v$ and $w$ belong to $H^1(\Omega)$, $-\Lap w=f\in (H^{1}(\Omega))^*$, $v|_S=0$, and $\partial_n w|_B=0$. Then
\bes
\int_{\pO} v\;\partial_n w = 0.
\ee
\end{lemma}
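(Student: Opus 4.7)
The plan is to interpret $\int_{\pO}v\,\partial_n w$ as the duality pairing of $\partial_n w\in H^{-1/2}(\pO)$ with $v|_{\pO}\in H^{1/2}(\pO)$, and then to invoke the identification $H^{-1/2}(B)\cong \Hoo(B)^*$ recorded in the Appendix to conclude that this pairing vanishes.

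First I would combine the trace theorem with the hypothesis $v|_S=0$ and the Appendix's characterization $\Hoo(B)=\{u\in H^{1/2}(\pO):u|_S=0\}$ to place $v|_{\pO}$ in $\Hoo(B)$. Second, equation \eqref{neumann-def}, taking $v$ itself as the allowed $H^1(\Omega)$ extension of its own trace, shows that $\int_{\pO}v\,\partial_n w$ is literally the value of the distribution $\partial_n w$ applied to the test function $v|_{\pO}$; no computation involving $\nabla v$, $\nabla w$, or $f$ is needed beyond recognizing this pairing.

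The crux is the final step. Under the Appendix's identification of $H^{-1/2}(B)$ with the continuous dual of $\Hoo(B)$ via the map $\iota$, the hypothesis $\partial_n w|_B=0$ is equivalent to the statement that the functional $\phi\mapsto \langle \partial_n w,\phi\rangle_{\pO}$ vanishes on every $\phi\in \Hoo(B)\subseteq H^{1/2}(\pO)$. Specializing to $\phi=v|_{\pO}$ gives $\int_{\pO}v\,\partial_n w=0$. The main obstacle is not computational but notational: one must reconcile the two senses of ``$\partial_n w|_B=0$'' used in the paper, namely the quotient-space restriction and the vanishing of a dual functional on $\Hoo(B)$. The Appendix makes this identification explicit, so the verification amounts to quoting the isomorphism rather than proving anything new.
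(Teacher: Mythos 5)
Your proof is correct and follows essentially the same route as the paper's: place $v|_{\pO}$ in $\Hoo(B)$ and pair it against $\partial_n w$ using the duality between $H^{-1/2}(B)$ and $\Hoo(B)$. The only difference is presentational: where you quote the isomorphism to assert that $\partial_n w|_B=0$ means the functional annihilates all of $\Hoo(B)$, the paper unwinds exactly this point by approximating $v|_{\pO}$ with Lipschitz functions compactly supported in $B$ and passing to the limit --- i.e., the density argument that justifies the identification you invoke.
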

\begin{proof}
Since $v|_S=0$ we have $v|_{\pO}\in H^{1/2}_{00}(B)$.  So there is a sequence $(\phi_k)$ of Lipschitz functions on $\partial\Omega$ with supports contained in $B$ converging to $v$ in $H^{1/2}(\pO)$.  By definition $\int_S \phi_k\; \partial_n w = \int_{\pO} \phi_k\; \partial_n w$.  Since $\partial_n w|_S=0$, $\int_S \phi_k \;\partial_n w = 0$.  So
$\int_{\pO} \phi_k\;\partial_n w=0$ for each $k$.  Since $\partial_n w\in H^{-1/2}(\pO) = (H^{1/2}(\pO))^*$ and since $\phi_k\ra v$ in $H^{1/2}(\pO)$
we conclude that $\int_{\pO} v\;\partial_n w = 0$.
\end{proof}

\subsection*{Equivalence of Norms}
We sketch the proofs here that the norms described in Section \ref{sec:intro}
are equivalent to the standard norms
on those spaces. In the following we
use three-barred norms $|||\cdot|||$ to denote those from Section
\ref{sec:intro} and reserve two-barred norms for their standard definitions.

Let $\psi\in H^{-1/2}(S)$ and let $u$ be the solution of 
\bes
\begin{aligned}
	-\Lap u &= 0 &\text{in $\Omega$}\\
	 \partial_n u &= \psi &\text{on $S$}\\
	u &= 0 &\text{on $B$}.
\end{aligned}
\ee
So $|||\psi|||_{H^{1/2}(S)}= ||\nabla u||_{L^2(\Omega)}$.
We see from equation \eqref{neumann-def} that 
\bes
||\partial_n u||_{H^{-1/2}(\pO)}\le ||\nabla u||_{L^2}.
\ee
By definition, 
\bes
||\partial_n u||_{H^{-1/2}(S)} \le ||\partial_n u||_{H^{-1/2}(\Omega)}
\ee
and hence $||\psi||_{H^{-1/2}(S)}\le ||\nabla u||_{L^2(\Omega)}\le ||u||_{H^1(\Omega)}$.
On the other hand, equation \eqref{mixed-cont} implies 
\bes
||u||_{H^1(\Omega)} \le c_2 ||\psi||_{H^{-1/2}(S)}
\ee
for some constant $c_2$ independent of $\psi$.  Hence
\bes
||\psi||_{H^{-1/2}(S)} \le |||\psi|||_{H^{-1/2}(S)} \le c_2 ||\psi||_{H^{-1/2}(S)}
\ee
which establishes the desired equivalence.

Let $\phi\in \Hoo(S)$ and let $u$ be the solution of 
\bes
\begin{aligned}
	-\Lap u &= 0 &\text{in $\Omega$}\\
	  u &= \phi &\text{on $\pO$}
\end{aligned}
\ee
so $|||\phi|||_{\Hoo(S)}=||\nabla u||_{L^2}^2$.
From inequality \ref{mixed-cont}, the continuity of the
trace map $H^1(\Omega)\ra H^{1/2}(\partial\Omega)$, and
the definition of the norm on $\Hoo(S)$ we see that there are
constants $c_0$ and $c_2$ such that
\bes
c_0 ||\phi||_{H^{1/2}(S)} \le ||u||_{H^{1}(\Omega)} \le c_2 ||\phi||_{H^{1/2}(S)}
\ee
But for functions in $H^1$ vanishing on $B$ it is well known that
$||u||_{H^1(\Omega)}$ is equivalent to $||\nabla u||_{L^2(\Omega)} = |||\phi|||_{\Hoo(S)}$.

Similar arguments work for the equivalence of norms in 
$H^{1/2}(S)$.  The only new ingredient is the fact that
the norm $||u||_{H^1(\Omega)}$ is equivalent to
\bes
\left\{\int_{\Omega} \abs{\nabla u}^2 + \left[\frac{1}{|S|}\int_S u\right]^2\right\}^{1/2}.
\ee

\bibliographystyle{amsalpha-abbrv}
\bibliography{KM-is-Landweber}
\end{document}